%
%
%
%
\documentclass[reqno]{amsart}
\usepackage{graphicx}

\usepackage{epstopdf}
\usepackage[justification=centering]{caption}
\usepackage{lineno,hyperref}
\usepackage{amsfonts,amsmath,latexsym,amssymb,amsthm}
\usepackage{geometry}

\newcommand{\hs}{{\mathcal H\,}}

\newcommand{\ns}{\mathbb N\,}
\modulolinenumbers[5]
\usepackage{makecell,multirow,diagbox}
\geometry{a4paper,left=2.5cm,right=2.5cm,top=2.5cm,bottom=3.5cm}
\hypersetup{
	colorlinks=true,
	linkcolor=blue
}
\hypersetup{
	citecolor=blue
}
\newtheorem{definition}{Definition}
\newtheorem{theorem}{Theorem}[section]

\usepackage[justification=centering]{caption}
\newtheorem{corollary}[theorem]{Corollary}
\newtheorem{proposition}{Proposition}
\newtheorem{example}[theorem]{Example}
\theoremstyle{remark}
\newtheorem{remark}[theorem]{Remark}

\numberwithin{equation}{section}
\allowdisplaybreaks[4]


\begin{document}

\title{On weaving frames in Hilbert spaces}

\author{Dongwei Li}
\address{School of Mathematics, HeFei University of Technology, 230009, P. R. China}
\email{dongweili@huft.edu.cn}

\subjclass[2000]{42C15, 46B20}



\keywords{frames, weaving frames, Hilbert space}

\begin{abstract}
	In this paper, we obtain  some new properties of weaving frames and present some conditions under which a family of frames is woven in Hilbert spaces.   Some characterizations of weaving frames in terms of  operators are given. We also give a condition associated with synthesis operators of frames such that the sequence of frames is woven. Finally, for a family of woven frames, we show that  they are stable under invertible operators and small perturbations. 
\end{abstract}

\maketitle

\section{Introduction}
Frames in Hilbert spaces were first introduced by Duffin and Schaeffer \cite{duffin1952class} for studying some problems in nonharmonic Fourier series, reintroduced in 1986 by Daubechies, Grossman, and Meyer \cite{daubechies1986painless} and popularized from then on. Redundancy of frames is one of the key features that are important in both theory and application, and it provides flexibility on constructions of various classes of frames. Just as the nice properties of frames, frames have been applied to wide range of science and technology fields such as signal processing \cite{han2014reconstruction}, coding theory    \cite{casazza2003equal,han2018recovery,li2018frame},  sampling theory \cite{lu2008theory}, quantum measurements \cite{eldar2002optimal} and image processing \cite{chan2004tight}, etc.

Let $\hs$ be a separable space and $I,~J$ a countable index set. A sequence  $\{f_j\}_{i\in J}$ of elements of $\hs$ is a frame for $\hs$ if there exist constants $A,~B>0$ such that
$$A\|f\|^2\le \sum_{j\in J}\left| \left\langle f,f_j\right\rangle \right| ^2\le B\|f\|^2,~~~\forall f\in\hs.$$
The number $A,~B$ are called lower and upper frame bounds, respectively. If $A=B$, then this frame is called an $A$-tight frame, and if $A=B=1$, then it is called a Parseval frame.

Suppose  $\{f_j\}_{j\in J}$ is a frame for $\hs$, then the frame operator is a self-adjoint positive invertible operators, which is given by
$$S:\hs\rightarrow\hs,~~Sf=\sum_{j\in J}\left\langle f,f_j\right\rangle f_j.$$ 
The following reconstruction formula holds:
$$f=\sum_{j\in J}\left\langle f,f_j\right\rangle S^{-1}f_j=\sum_{j\in J}\left\langle f,S^{-1}f_j\right\rangle f_j,$$
where the family $\{\widetilde{f}_j\}_{j\in J}=\{S^{-1}f_j\}_{j\in J}$ is also a frame for $\hs$, which is called the canonical dual frame of $\{f_j\}_{j\in J}$.
The frame $\{g_j\}_{j\in J}$ for $\hs$ is called an alternate dual frame of $\{f_j\}_{j\in J}$ if the following formula holds:
$$f=\sum_{j\in J}\left\langle f,f_j\right\rangle g_j=\sum_{j\in J}\left\langle f,g_j\right\rangle f_j$$
for all $f\in\hs$ \cite{han2000frames}.

Weaving frames were introduced in \cite{bemrose2015weaving} and investigated in \cite{casazza2016weaving,casazza2015weaving}. The concept of weaving frames is motivated by distributed signal processing, which have potential applications in wireless sensor networks that require distributed processing under different frames, as well as pre-processing of signals using Gabor frames. For example, in wireless sensor  network, let  two frames $F=\{f_j\}_{j\in J}$ and $G=\{g_j\}_{j\in J}$ be measures tools. At each sensor, we encode signal $f$ either with $f_j$ or $g_j$, so the encode coefficients is the set of numbers $\{\left\langle f,f_j\right\rangle \}_{j\in\sigma}\cup\{\left\langle f,g_j\right\rangle \}_{j\in\sigma^c}$ for some $\sigma\subset J$.
If $\{f_j\}_{j\in\sigma}\cup\{g_j\}_{j\in\sigma^c}$ is still a frame, then $f$ can be recovered robustly from these coefficients. We say that $F$ and $G$ are woven frames.

 But $\{f_j\}_{j\in\sigma}\cup\{g_j\}_{j\in\sigma^c}$ may be not a frame for $\hs$ for any $\sigma\subset J$. For example, let $\{e_j\}_{j=1}^3$ be an orthonormal basis for $\hs$, $F=\{f_j\}_{j=1}^3=\{e_1, e_2, e_1+e_3\}$ and $G=\{g_j\}_{j=1}^3=\{e_1,e_3,e_1+e_2\}$, then $F$ and $G$ are two frames for $\hs$. If we choose $\sigma=\{1,2\}$, then $\{f_j\}_{j\in\sigma}\cup\{g_j\}_{j\in\sigma^c}=\{e_1, e_2, e_1+e_2\}$ is not a frame for $\hs$.
  
What is the condition such that $\{f_j\}_{j\in\sigma}\cup\{g_j\}_{j\in\sigma^c}$ is a frame for $\hs$ for any $\sigma\subset J$? In this paper, we give some sufficient conditions under that a family of frames is woven in Hilbert spaces, we also consider that perturbation applied to woven frames leaves them woven. 

We first recall some concept and properties of woven frames.
 
\begin{definition}\cite{bemrose2015weaving}
A family of frames $\{F_i=\{f_{ij}\}_{j\in J}\}_{i\in I}$ for $\hs$ is said to be woven if there are universal constants $A$ and $B$ such that for every partition $\{\sigma_i\}_{i\in I}$ of $J$ the family $\{f_{ij}\}_{j\in \sigma_i,i\in I}$ is a frame for $\hs$ with lower and upper frame bounds $A$ and $B$, respectively, where $I=\{1,2,\cdots,m\}$. And $\{f_{ij}\}_{j\in \sigma_i,i\in I}$ called a weaving frame (or a weaving).
\end{definition}
The following proposition gives that every weaving automatically has a universal upper frame bound.
\begin{proposition}\cite{bemrose2015weaving}\label{prop1}
If each $F_i=\{f_{ij}\}_{j\in J}$ is a Bessel sequence for $\hs$ with bounds $B_i$ for all $i\in I$, then every weaving is a Bessel sequence with $\sum_{i\in I}B_i$ as a Bessel bound.
\end{proposition}

Let $\{\sigma_i\}_{i\in I}$ be any partition of $J$, now we define the space:
$$\bigoplus_{i\in I}\ell^2(\sigma_i)=\left\lbrace  \{c_{ij}\}_{j\in\sigma_i,i\in I}|c_{ij}\in \mathbb{C},~\sigma_i\subset J,~i\in I,~\sum_{j\in \sigma_i,i\in I}|c_{ij}|<\infty\right\rbrace ,$$
with the inner product
$$\left\langle \{c_{ij}\}_{j\in\sigma_i,i\in I},\{d_{ij}\}_{j\in\sigma_i,i\in I}\right\rangle =\sum_{j\in \sigma_i,i\in I}|c_{ij}\overline{d_{ij}}|,$$
it is clean that  $\bigoplus_{i\in I}\ell^2(\sigma_i)$ is a Hilbert space.

Let the family of frames $\{F_i=\{f_{ij}\}_{j\in J}\}_{i\in I}$ be woven for $\hs$, for  any partition $\{\sigma_i\}_{i\in I}$ of $J$, $W=\{f_{ij}\}_{j\in\sigma_i,i\in I}$ is a frame for $\hs$, the operator $T_W:\bigoplus_{i\in I}\ell^2(\sigma_i)\rightarrow\hs$ defined by
$$T_W(\{c_{ij}\})=\sum_{i\in I}T_{F_i}D_{\sigma_i}(\{c_{ij}\})=\sum_{i\in I}\sum_{j\in \sigma_i}c_{ij}f_{ij},$$
is called the synthesis operator, where $T_{F_i}$ is the synthesis operator of $F_i$ and $D_{\sigma_i}$ is a $|J|\times|J|$ diagonal matrix with $d_{jj}=1$ for $j\in\sigma_i$ and otherwise 0. The adjoint operator of $T_W$ is given by:
 $$T_W^*:\hs \rightarrow\bigoplus_{i\in I}\ell^2(\sigma_i),~~T_W^*(f)=\sum_{i\in I}D_{\sigma_i}{T_{F_i}^{\sigma_i}}^*(f)=\{\left\langle f,f_{ij}\right\rangle \}_{j\in\sigma_i,i\in I},~~~,\forall f\in\hs,$$
 and is called the analysis operator. The  frame operator $S_W$ is defined as
 $$S_W:\hs\rightarrow\hs, S_W(f)=T_WT_W^*(f)=(\sum_{i\in I}T_{F_i}D_{\sigma_i})(\sum_{i\in I}T_{F_i}D_{\sigma_i})^*(f)=\sum_{i\in I}S_{F_i}^{\sigma_i}(f)=\sum_{j\in \sigma_i,i\in I}\left\langle f,f_{ij}\right\rangle f_{ij},$$
 where $S_{F_i}$ is the frame operator of $F_i$ and $S_{F_i}^{\sigma_i}$ is a ``truncated form'' of $S_{F_i}$.
 The operator $S_W$ is positive, self-adjoint and invertible.
 \section{Main Results}
 We first give some properties of weaving frames. 
 
 \begin{proposition}
Let two frames $F=\{f_j\}_{j\in J}$ and $G=\{g_j\}_{j\in J}$ be woven with synthesis operators $T_F$ and $T_G$, respectively. For any $\sigma\subset J$, a weaving $\{f_j\}_{j\in\sigma}\cup\{g_j\}_{j\in\sigma^c}$ is a $A$-tight frames for $\hs$ if and only if $T_FD_{\sigma}T_F^*+T_GD_{\sigma^c}T_G^*=AI_{\hs}$, which $D_{\sigma}$ is a $|J|\times|J|$ diagonal matrix with $d_{jj}=1$ for $j\in\sigma$ and otherwise 0, $D_{\sigma^c}$ is a $|J|\times|J|$ diagonal matrix with $d_{jj}=1$ for $j\in\sigma^c$ and otherwise 0.
 \end{proposition}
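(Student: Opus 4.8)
The plan is to identify the frame operator of the weaving $W=\{f_j\}_{j\in\sigma}\cup\{g_j\}_{j\in\sigma^c}$ with the operator $T_FD_{\sigma}T_F^*+T_GD_{\sigma^c}T_G^*$, and then to invoke the elementary characterization of tight frames through their frame operators. First I would compute, for an arbitrary $f\in\hs$, the action of $T_FD_{\sigma}T_F^*$. Since the analysis operator yields $T_F^*f=\{\left\langle f,f_j\right\rangle\}_{j\in J}$, applying the diagonal projection $D_{\sigma}$ zeroes out the coordinates outside $\sigma$, and then the synthesis operator $T_F$ reassembles the result, so that $T_FD_{\sigma}T_F^*f=\sum_{j\in\sigma}\left\langle f,f_j\right\rangle f_j$. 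An identical computation gives $T_GD_{\sigma^c}T_G^*f=\sum_{j\in\sigma^c}\left\langle f,g_j\right\rangle g_j$. Adding these shows that $T_FD_{\sigma}T_F^*+T_GD_{\sigma^c}T_G^*$ is exactly the frame operator $S_W$ of the weaving, as already recorded in the discussion preceding this section.

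Having made this identification, the proof reduces to the assertion that a frame is $A$-tight if and only if its frame operator equals $AI_{\hs}$. For the converse direction, I would simply observe that if $S_W=AI_{\hs}$, then for every $f\in\hs$ we have
$$\sum_{j\in\sigma}\left|\left\langle f,f_j\right\rangle\right|^2+\sum_{j\in\sigma^c}\left|\left\langle f,g_j\right\rangle\right|^2=\left\langle S_Wf,f\right\rangle=A\|f\|^2,$$
which is precisely the $A$-tight frame condition.

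For the forward direction, I would start from the $A$-tightness of $W$, which says $\left\langle S_Wf,f\right\rangle=A\|f\|^2=\left\langle AI_{\hs}f,f\right\rangle$ for all $f\in\hs$. Hence $\left\langle(S_W-AI_{\hs})f,f\right\rangle=0$ for every $f$, and since $S_W-AI_{\hs}$ is self-adjoint, this forces $S_W-AI_{\hs}=0$, i.e.\ $S_W=AI_{\hs}$. Combined with the identification of the first paragraph, this yields the desired identity.

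I do not expect any genuine obstacle here: the argument is a routine combination of an operator-level bookkeeping computation with the standard fact that a self-adjoint operator is determined by its associated quadratic form. The only point requiring a little care is the bookkeeping in the first step, namely correctly tracking how the diagonal truncations $D_{\sigma}$ and $D_{\sigma^c}$ interact with the synthesis and analysis operators, since getting the index sets right is exactly what makes the two pieces add up to the full frame operator $S_W$ rather than overlapping or omitting indices.
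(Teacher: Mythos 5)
Your proof is correct and takes essentially the same approach as the paper: both identify $T_FD_{\sigma}T_F^*+T_GD_{\sigma^c}T_G^*$ with the frame operator $S_W$ of the weaving and then reduce tightness to the statement $S_W=AI_{\hs}$. If anything, yours is more complete — the paper merely expands $S_W=(T_FD_{\sigma}+T_GD_{\sigma^c})(T_FD_{\sigma}+T_GD_{\sigma^c})^*$ and drops the cross terms via $D_{\sigma}D_{\sigma^c}=0$, leaving the equivalence between $A$-tightness and $S_W=AI_{\hs}$ implicit, whereas you verify the identification pointwise and prove both directions of that equivalence explicitly.
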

\begin{proof}
For any $\sigma\subset J$, then the synthesis of weaving frame $W=\{f_j\}_{j\in\sigma}\cup\{g_j\}_{j\in\sigma^c}$ is $T_FD_{\sigma}+T_GD_{\sigma^c}$. Then the frame operator 
\begin{align*}
S_W
&=(T_FD_{\sigma}+T_GD_{\sigma^c})(T_FD_{\sigma}+T_GD_{\sigma^c})^*\\
&=T_FD_{\sigma}T_F^*+T_FD_{\sigma}D_{\sigma^c}T_G^*+T_GD_{\sigma^c}D_{\sigma}T_F^*+T_GD_{\sigma^c}T_G^*\\
&=T_FD_{\sigma}T_F^*+T_GD_{\sigma^c}T_G^*.
\end{align*}

\end{proof}
\begin{proposition}\label{pro3}
Let two frames $F=\{f_j\}_{j\in J}$ and $G=\{g_j\}_{j\in J}$ be woven with universal constants $A$ and $B$ frame operators $S_F$ and $S_G$, respectively. If $\|S^{-1}_F\|\|S_F-S_G\|<\frac{A}{B}$ $($or $\|S^{-1}_G\|\|S_F-S_G\|<\frac{A}{B}$$)$, then $S^{-1}_{F}F=\{S^{-1}_Ff_j\}_{j\in J}$ and $S^{-1}_GG=\{S^{-1}_Gg_j\}_{j\in J}$ are also woven.
\end{proposition}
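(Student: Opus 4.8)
The plan is to regard the pair $(S_F^{-1}F,\,S_G^{-1}G)$ of canonical dual frames as a small perturbation of the woven pair obtained by applying the \emph{single} invertible operator $S_F^{-1}$ to both $F$ and $G$. Fix an arbitrary partition, i.e. a set $\sigma\subset J$, write $W_\sigma=\{f_j\}_{j\in\sigma}\cup\{g_j\}_{j\in\sigma^c}$ for the original weaving and $R=S_G^{-1}-S_F^{-1}$; every estimate I produce will be independent of $\sigma$, which is precisely what ``woven'' demands. First I would record the a priori operator bounds: taking $\sigma=J$ exhibits $F$ as a weaving, and $\sigma=\emptyset$ exhibits $G$ as a weaving, so each of $F,G$ is a frame with bounds between $A$ and $B$. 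Hence $A I_{\hs}\le S_F\le B I_{\hs}$ and $A I_{\hs}\le S_G\le B I_{\hs}$, giving $\|S_F\|,\|S_G\|\le B$ and $\|S_F^{-1}\|,\|S_G^{-1}\|\le 1/A$.

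Next I introduce the reference weaving. Since $S_F^{-1}$ is bounded and invertible and $F,G$ are woven with bounds $A,B$, the family $V_\sigma:=\{S_F^{-1}f_j\}_{j\in\sigma}\cup\{S_F^{-1}g_j\}_{j\in\sigma^c}=S_F^{-1}W_\sigma$ is a frame, and a direct computation with its analysis operator gives $\|T_{V_\sigma}^*f\|^2=\sum_{j\in\sigma}|\langle S_F^{-1}f,f_j\rangle|^2+\sum_{j\in\sigma^c}|\langle S_F^{-1}f,g_j\rangle|^2\ge A\|S_F^{-1}f\|^2\ge \frac{A}{\|S_F\|^2}\|f\|^2$ for every $f\in\hs$, a lower frame bound uniform in $\sigma$ (and a matching upper bound $B\|S_F^{-1}\|^2$).

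Now I compare the target weaving $U_\sigma:=\{S_F^{-1}f_j\}_{j\in\sigma}\cup\{S_G^{-1}g_j\}_{j\in\sigma^c}$ with $V_\sigma$. Their analysis operators agree on the $\sigma$-block and differ on the $\sigma^c$-block by the vector $\{\langle Rf,g_j\rangle\}_{j\in\sigma^c}$, so $\|T_{U_\sigma}^*f-T_{V_\sigma}^*f\|^2=\sum_{j\in\sigma^c}|\langle Rf,g_j\rangle|^2\le B\|Rf\|^2\le B\|R\|^2\|f\|^2$, using the Bessel bound $B$ for $G$. The triangle inequality then yields the uniform estimates $\big(\tfrac{\sqrt A}{\|S_F\|}-\sqrt B\,\|R\|\big)\|f\|\le\|T_{U_\sigma}^*f\|\le\big(\sqrt B\,\|S_F^{-1}\|+\sqrt B\,\|R\|\big)\|f\|$, so $U_\sigma$ is a frame with $\sigma$-independent bounds as soon as $\sqrt B\,\|R\|<\sqrt A/\|S_F\|$. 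To turn the hypothesis into this smallness, I use the resolvent-type identity $R=S_G^{-1}-S_F^{-1}=S_F^{-1}(S_F-S_G)S_G^{-1}$ together with $\|S_G^{-1}\|\le 1/A$ to get $\|R\|\le \frac{1}{A}\|S_F^{-1}\|\,\|S_F-S_G\|$. The parenthetical alternative hypothesis is symmetric: one instead compares $U_\sigma$ with $S_G^{-1}W_\sigma$ and uses $R=S_G^{-1}(S_F-S_G)S_F^{-1}$ with $\|S_F^{-1}\|\le 1/A$.

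I expect the genuine obstacle to be purely quantitative: squeezing all constants down to exactly the stated threshold $A/B$ while keeping every bound independent of $\sigma$. Feeding the two worst-case estimates $\|S_G^{-1}\|\le 1/A$ and $\|S_F\|\le B$ into the condition $\sqrt B\,\|R\|<\sqrt A/\|S_F\|$ most directly yields a sufficient condition of the slightly stronger shape $\|S_F^{-1}\|\,\|S_F-S_G\|<(A/B)^{3/2}$; reaching the cleaner $A/B$ requires exploiting the factors $\|S_F\|$ and $\|S_G^{-1}\|$ jointly rather than bounding each separately (they are coupled precisely because $S_F$ and $S_G$ are close in the regime $\|S_F-S_G\|$ small). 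Getting this bookkeeping tight, uniformly over the partition $\sigma$, is the crux of the argument.
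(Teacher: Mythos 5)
Your strategy coincides with the paper's: estimate the analysis operator of the mixed weaving by a triangle inequality against a reference weaving obtained from a single inverse frame operator, and control the difference via a resolvent identity for $S_G^{-1}-S_F^{-1}$. Each inequality you write down is correct (in fact more correct than the paper's printed proof, which at the analogous point asserts $\|S_F^{-1}f\|\ge\|f\|/\|S_F^{-1}\|$ — a false inequality in general; the denominator should be $\|S_F\|$, as you have it). But, as you concede in your last paragraph, what you have actually established is the conclusion under the hypothesis $\|S_F^{-1}\|\|S_F-S_G\|<(A/B)^{3/2}$, which is strictly more restrictive than the stated $A/B$ whenever $A<B$. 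This is a genuine gap, not loose bookkeeping: it comes from a structural mismatch in your pairing, namely that you test against $S_F^{-1}W_\sigma$ (main term $\sqrt{A}\|S_F^{-1}f\|$) while the hypothesis controls $\|S_F^{-1}\|$, and you then decouple the error via $\|Rf\|\le\|R\|\|f\|$, which forces the two independent worst-case bounds $\|S_G^{-1}\|\le 1/A$ and $\|S_F\|\le B$ — exactly the source of the extra factor $\sqrt{A/B}$.

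The fix is to cross the pairing and never separate the error from the main term. Under $\|S_F^{-1}\|\|S_F-S_G\|<A/B$, compare with $S_G^{-1}W_\sigma$: writing $S_F^{-1}f=S_G^{-1}f+(S_F^{-1}-S_G^{-1})f$ on the $\sigma$-block and using the universal bounds of $W_\sigma$,
\begin{align*}
\Big(\sum_{j\in\sigma}\left|\left\langle S_F^{-1}f,f_j\right\rangle\right|^2+\sum_{j\in\sigma^c}\left|\left\langle S_G^{-1}f,g_j\right\rangle\right|^2\Big)^{1/2}
&\ge \sqrt{A}\,\|S_G^{-1}f\|-\sqrt{B}\,\|(S_F^{-1}-S_G^{-1})f\|\\
&\ge \big(\sqrt{A}-\sqrt{B}\,\|S_F^{-1}\|\|S_F-S_G\|\big)\|S_G^{-1}f\|,
\end{align*}
where the second step uses $(S_F^{-1}-S_G^{-1})f=S_F^{-1}(S_G-S_F)S_G^{-1}f$, so the error is proportional to the \emph{same} quantity $\|S_G^{-1}f\|$ as the main term. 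Since $\sqrt{B}\,\|S_F^{-1}\|\|S_F-S_G\|<\sqrt{B}\cdot A/B=A/\sqrt{B}\le\sqrt{A}$, the prefactor is strictly positive, and $\|S_G^{-1}f\|\ge\|f\|/\|S_G\|\ge\|f\|/B$ yields a lower frame bound independent of $\sigma$; note the argument really only needs $\|S_F^{-1}\|\|S_F-S_G\|<\sqrt{A/B}$. Symmetrically, the parenthetical hypothesis on $\|S_G^{-1}\|$ pairs with your reference $S_F^{-1}W_\sigma$ via $S_G^{-1}-S_F^{-1}=S_G^{-1}(S_F-S_G)S_F^{-1}$, giving the bound $\big(\sqrt{A}-\sqrt{B}\,\|S_G^{-1}\|\|S_F-S_G\|\big)\|S_F^{-1}f\|$. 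For what it is worth, the paper's own proof uses the same mismatched pairing as yours, never invokes the hypothesis to verify positivity of its final constant, and contains the erroneous inequality noted above; so your diagnosis of where the difficulty lies is accurate, but the crossed estimate is what actually closes it.
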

\begin{proof}
We only consider the case of $\|S^{-1}_F\|\|S_F-S_G\|<\frac{A}{B}$. Now for every $\sigma\subset J$ and each $f\in\hs$, we have
\begin{align*}
&\big(\sum_{j\in \sigma}\left| \left\langle f,S^{-1}_Ff_i\right\rangle \right|^2 +\sum_{j\in \sigma^c}\left| \left\langle f,S^{-1}_Gg_i\right\rangle \right|^2\big)^{1/2}\\
&=\big(\sum_{j\in \sigma}\left| \left\langle S^{-1}_Ff,f_i\right\rangle \right|^2 +\sum_{j\in \sigma^c}\left| \left\langle S^{-1}_Gf,g_i\right\rangle \right|^2\big)^{1/2}\\
&=\big(\sum_{j\in \sigma}\left| \left\langle S^{-1}_Ff,f_i\right\rangle \right|^2 +\sum_{j\in \sigma^c}\left| \left\langle S^{-1}_Ff+(S^{-1}_G-S^{-1}_F)f,g_i\right\rangle \right|^2\big)^{1/2}\\
&\ge \big(\sum_{j\in \sigma}\left| \left\langle S^{-1}_Ff,f_i\right\rangle \right|^2 +\sum_{j\in \sigma^c}\left| \left\langle S^{-1}_Ff,g_i\right\rangle \right|^2\big)^{1/2}-\big(\sum_{j\in \sigma^c}\left| \left\langle (S^{-1}_G-S^{-1}_F)f,g_i\right\rangle \right|^2\big)^{1/2}\\
&\ge \sqrt{A}\|S^{-1}_Ff\|-\big(\sum_{j\in J}\left| \left\langle (S^{-1}_G-S^{-1}_F)f,g_i\right\rangle \right|^2\big)^{1/2}\\
&\ge \sqrt{A}\|S^{-1}_Ff\|-\sqrt{B}\|S^{-1}_G-S^{-1}_F\|\|f\|\\
&\bigg(\frac{\sqrt{A}}{\|S^{-1}_F\|}-\sqrt{B}\|S^{-1}_G-S^{-1}_F\|\bigg)\|f\|,
\end{align*}
and 
\begin{align*}
\sum_{j\in \sigma}\left| \left\langle f,S^{-1}_Ff_i\right\rangle \right|^2 +\sum_{j\in \sigma^c}\left| \left\langle f,S^{-1}_Gg_i\right\rangle \right|^2&\le \sum_{j\in J}\left| \left\langle f,S^{-1}_Ff_i\right\rangle \right|^2 +\sum_{j\in J}\left| \left\langle f,S^{-1}_Gg_i\right\rangle \right|^2\\
&\le B(\|S_F\|^2+\|S_G\|^2)\|f\|^2.
\end{align*}
\end{proof}  

In Proposition 3, for any $\sigma\subset J$, $\{f_j\}_{j\in\sigma}\cup\{g_j\}_{j\in\sigma^c}$ is a frame for $\hs$, and $\{S^{-1}_Ff_j\}_{j\in\sigma}\cup\{S^{-1}_Gg_j\}_{j\in\sigma^c}$
is also a frame, it should be noted that $\{S^{-1}_Ff_j\}_{j\in\sigma}\cup\{S^{-1}_Gg_j\}_{j\in\sigma^c}$ is not a dual frame of $\{S^{-1}_Ff_j\}_{j\in\sigma}\cup\{S^{-1}_Gg_j\}_{j\in\sigma^c}$ in general.
\begin{example}
For two given frames  $F=\{f_j\}_{j=1}^3$, $G=\{g_j\}_{j=1}^3$,
$$
F=\left\{ {\left[ {\begin{array}{*{20}c}
		{1 }  \\
		0 \\
		\end{array}} \right],\left[ {\begin{array}{*{20}c}
		0  \\
		1  \\
		\end{array}} \right],\left[ {\begin{array}{*{20}c}
		1 \\
		1  \\
		\end{array}} \right]} \right\},~~~G=\left\{ {\left[ {\begin{array}{*{20}c}
	1  \\
		0 \\
		\end{array}} \right],\left[ {\begin{array}{*{20}c}
		1  \\
		1\\
		\end{array}} \right],\left[ {\begin{array}{*{20}c}
		1 \\
		-1  \\
		\end{array}} \right]} \right\}.
$$ 
It is easy to verify that $F$ and $G$ are woven, and $S_F^{-1}F$ and $S_G^{-1}G$ are woven. Suppose $\sigma=\{1,2\}$, then the weaving is  given by
$$W=\left\{ {\left[ {\begin{array}{*{20}c}
			{1 }  \\
			0 \\
	\end{array}} \right],\left[ {\begin{array}{*{20}c}
			0  \\
			1  \\
	\end{array}} \right],\left[ {\begin{array}{*{20}c}
			1 \\
			-1  \\
	\end{array}} \right]} \right\},~~~\widetilde{W}=\{S_F^{-1}f_1,S_F^{-1}f_2,S_G^{-1}g_3\}=\left\{ {\left[ {\begin{array}{*{20}c}
	\frac{2}{3}  \\
	-\frac{1}{3}  \\
	\end{array}} \right],\left[ {\begin{array}{*{20}c}
	-\frac{1}{3}  \\
	\frac{2}{3}  \\
	\end{array}} \right],\left[ {\begin{array}{*{20}c}
	\frac{1}{3} \\
	-\frac{1}{2}  \\
	\end{array}} \right]} \right\}.$$
We compute $T_WT_{\widetilde{W}}^*=\left[ {\begin{array}{*{20}c}
	{1 }  &-\frac{5}{6}\\
	-\frac{2}{3} & \frac{7}{6}\\
	\end{array}} \right]\ne I_{2\times2}$, thus $\widetilde{W}$ is not a dual frame of $W$.
\end{example}
The following result give a simple characterization of dual frames of a weaving. 
\begin{proposition}\label{prop4}
Suppose that the family of frames $\{F_i=\{f_{ij}\}_{j\in J}\}_{i\in I}$ is woven.  Let $\{\sigma_i\}_{i\in I}$ be any partition of $J$, then the dual frames of weaving frames $W=\{f_{ij}\}_{j\in\sigma_i,i\in I}$ is given by $S_W^{-1}T_W+U$, where $\sum_{i\in I}T_{F_i}D_{\sigma_i}U^*=0$.
\end{proposition}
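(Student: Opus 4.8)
The plan is to reduce the statement to the standard operator characterization of dual frames and then solve a simple operator equation. Recall that a frame $V=\{v_{ij}\}_{j\in\sigma_i,i\in I}$, indexed in the same way as $W$, is a dual frame of $W$ precisely when the reconstruction formula $f=\sum_{j\in\sigma_i,i\in I}\langle f,f_{ij}\rangle v_{ij}$ holds for every $f\in\hs$; in terms of synthesis operators this reads $T_V T_W^*=I_{\hs}$. So the whole proposition amounts to describing all bounded operators $T_V:\bigoplus_{i\in I}\ell^2(\sigma_i)\to\hs$ satisfying $T_V T_W^*=I_{\hs}$, and then reading off the corresponding frames.

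First I would exhibit one solution. Since $W$ is a frame, $S_W=T_W T_W^*$ is positive and invertible, and the canonical dual $S_W^{-1}W=\{S_W^{-1}f_{ij}\}$ has synthesis operator $S_W^{-1}T_W$. This is a particular solution because $S_W^{-1}T_W T_W^*=S_W^{-1}S_W=I_{\hs}$. Next I would describe the general solution. Writing an arbitrary candidate as $T_V=S_W^{-1}T_W+U$ with $U$ a bounded operator on the same space, I compute $T_V T_W^*=S_W^{-1}T_W T_W^*+U T_W^*=I_{\hs}+U T_W^*$, so that $T_V$ is a dual synthesis operator if and only if $U T_W^*=0$. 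Conversely, given any dual frame $V$, the operator $U:=T_V-S_W^{-1}T_W$ satisfies $U T_W^*=T_V T_W^*-I_{\hs}=0$, so every dual arises in this form.

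Finally I would convert the constraint into the stated one by taking adjoints: $U T_W^*=0$ is equivalent to $T_W U^*=0$, and since $T_W=\sum_{i\in I}T_{F_i}D_{\sigma_i}$ and each truncation matrix satisfies $D_{\sigma_i}^*=D_{\sigma_i}$, this is exactly $\sum_{i\in I}T_{F_i}D_{\sigma_i}U^*=0$. Together with the particular solution $S_W^{-1}T_W$, this yields the description $S_W^{-1}T_W+U$ with $\sum_{i\in I}T_{F_i}D_{\sigma_i}U^*=0$.

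The step requiring the most care is confirming that each operator $S_W^{-1}T_W+U$ produced this way really is the synthesis operator of a frame, rather than merely a Bessel sequence. This follows because the identity $T_V T_W^*=I_{\hs}$ forces $T_V$ to be surjective onto $\hs$, and a bounded synthesis operator that is onto corresponds to a frame; one should also keep in mind that $U$ must be bounded for the resulting family to be Bessel in the first place. Beyond this, the argument is routine operator algebra, the only bookkeeping being the self-adjointness of the $D_{\sigma_i}$ and the adjoint identity $(T_V T_W^*)^*=T_W T_V^*$.
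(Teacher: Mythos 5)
Your proof is correct. The paper itself provides no argument --- its proof reads only ``A simple calculation yields this'' --- and your parametrization of dual synthesis operators as $S_W^{-1}T_W+U$ with $UT_W^*=0$ (equivalently, by taking adjoints and using $D_{\sigma_i}^*=D_{\sigma_i}$, the stated condition $\sum_{i\in I}T_{F_i}D_{\sigma_i}U^*=0$) is exactly the routine calculation the paper is alluding to, with the welcome extra care of checking that surjectivity of $T_V$, forced by $T_VT_W^*=I_{\hs}$, guarantees the resulting Bessel sequence is genuinely a frame.
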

\begin{proof}
A simple calculation yields this.
\end{proof}
\begin{remark}
In Proposition \ref{prop4}, we call $S_W^{-1}T_W+U$ the alternative dual of $W$; if $U=0$, then $S_W^{-1}$ is called canonical dual of $W$. Thus, the dual of a weaving is similar to the dual of traditional frames.
\end{remark}
We first give a characterization of weaving frames in terms of an operator.
\begin{theorem}
For $i\in I$, let $F_i=\{f_{ij}\}_{j\in J}$ be a sequence for $\hs$. The following conditions are equivalent:
\begin{enumerate}
	\item[(i)] The family of sequences $\{F_i\}_{i\in I}$ is woven frames for $\hs$.
	\item[(ii)]  There exists $A>0$ such that there exists a bounded linear operator $T:\bigoplus_{i\in I}\ell^2(\sigma_i)\rightarrow\hs$ such that $T(u_{ij})=f_{ij}$ for all $j\in\sigma_i,i\in I$, and $AI_{\hs}\le TT^*$, where $\{u_{ij}\}_{j\in\sigma_i,i\in I}$ is the standard orthonormal basis for $\bigoplus_{i\in I}\ell^2(\sigma_i)$.
\end{enumerate}
\end{theorem}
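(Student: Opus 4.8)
The plan is to treat this as the weaving analogue of the classical characterization of a frame through its synthesis operator, reducing both implications to the single identity $\|T^*f\|^2=\sum_{j\in\sigma_i,i\in I}|\langle f,f_{ij}\rangle|^2$. The first observation I would record is that, since $\{u_{ij}\}_{j\in\sigma_i,i\in I}$ is an orthonormal basis of $\bigoplus_{i\in I}\ell^2(\sigma_i)$, any bounded operator $T$ satisfying $Tu_{ij}=f_{ij}$ is uniquely determined and must coincide with the synthesis operator $T_W$ of the weaving $W=\{f_{ij}\}_{j\in\sigma_i,i\in I}$; its adjoint is the analysis operator $T^*f=\{\langle f,f_{ij}\rangle\}_{j\in\sigma_i,i\in I}$. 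Thus the existence of such a bounded $T$ is exactly the assertion that $W$ is a Bessel sequence, and the condition (ii) should be read as holding for every partition $\{\sigma_i\}_{i\in I}$ with one common constant $A>0$.

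For the implication (i)$\Rightarrow$(ii), I would fix an arbitrary partition $\{\sigma_i\}_{i\in I}$. Since $\{F_i\}_{i\in I}$ is woven with universal bounds $A,B$, Proposition \ref{prop1} guarantees that $W$ is Bessel, so $T:=T_W$ is a well-defined bounded operator with $Tu_{ij}=f_{ij}$. The lower frame inequality then reads $A\|f\|^2\le\sum_{j\in\sigma_i,i\in I}|\langle f,f_{ij}\rangle|^2=\|T^*f\|^2=\langle TT^*f,f\rangle$ for all $f\in\hs$, which is precisely $AI_{\hs}\le TT^*$. Because $A$ is the universal lower bound, the same $A$ works for every partition, as required.

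For (ii)$\Rightarrow$(i), I would again fix a partition and take the operator $T$ supplied by the hypothesis. Its boundedness gives $\sum_{j\in\sigma_i,i\in I}|\langle f,f_{ij}\rangle|^2=\|T^*f\|^2\le\|T\|^2\|f\|^2$, so $W$ is Bessel, while $AI_{\hs}\le TT^*$ yields $\|T^*f\|^2=\langle TT^*f,f\rangle\ge A\|f\|^2$, so $W$ is a frame with lower bound $A$, uniformly over all partitions. The single genuine obstacle I anticipate is producing a \emph{universal} upper bound, since a priori $\|T\|$ could depend on the chosen partition. To remove this dependence I would specialize (ii) to the trivial partitions in which one block $\sigma_{i_0}=J$ and all other blocks are empty: the associated operator is then the synthesis operator of $F_{i_0}$ alone, so its boundedness forces each $F_i$ to be a Bessel sequence, say with bound $B_i$. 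Proposition \ref{prop1} then supplies the single universal upper bound $B=\sum_{i\in I}B_i$ valid for every weaving, which combined with the uniform lower bound $A$ establishes that $\{F_i\}_{i\in I}$ is woven and completes the equivalence.
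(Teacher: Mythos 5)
Your proposal is correct and follows essentially the same route as the paper's proof: for (i)$\Rightarrow$(ii) you take $T$ to be the synthesis operator of the weaving and translate the universal lower bound into $AI_{\hs}\le TT^*$, and for (ii)$\Rightarrow$(i) you identify $T^*$ with the analysis operator to get the uniform lower bound, then specialize to the trivial partition $\sigma_{i}=J$ to make each $F_i$ Bessel and invoke Proposition \ref{prop1} for the universal upper bound, exactly as the paper does. Your added remarks (uniqueness of $T$ on the orthonormal basis, and the explicit reading of (ii) as holding for every partition with one common $A$) only make the argument cleaner.
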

\begin{proof}
	
  (i)$\Rightarrow$(ii): Suppose $A$ is a universal lower frame bound for the family of sequences $\{F_i\}_{i\in I}$, let $T_W$ be the synthesis operator  associated with $W=\{f_{ij}\}_{j\in\sigma_i,i\in I}$.
  
  Choose $T=T_W$, then 
 $$T(u_{ij})=T_{W}(u_{ij})=\sum_{i\in I}T_{F_i}D_{\sigma_i}(u_{ij})=f_{ij},~~~j\in\sigma_i,i\in I,$$
where $\{u_{ij}\}_{j\in\sigma_i,i\in I}$ is the standard orthonormal basis for $\bigoplus_{i\in I}\ell^2(\sigma_i)$. 

Furthermore, for all $f\in\hs$, we have
$$A\left\langle f,f\right\rangle =A\|f\|^2\le \sum_{j\in \sigma_i,i\in I}\left| \left\langle f,f_{ij}\right\rangle \right| ^2=\|T_W^*(f)\|^2=\|T^*(f)\|^2=\left\langle TT^*f,f\right\rangle .$$
This gives $AI_{\hs}\le TT^*$.

(ii)$\Rightarrow$(i): For  any partition $\{\sigma_i\}_{i\in I}$ of $J$, for $\{c_{ij}\}\in \bigoplus_{i\in I}\ell^2(\sigma_i)$ and $T:\bigoplus_{i\in I}\ell^2(\sigma_i)\rightarrow\hs$, we have
\begin{align*}
\left\langle T(\{c_{ij}\}),f\right\rangle =\left\langle T(\sum_{j\in \sigma_i,i\in I}{c_{ij}u_{ij}}),f\right\rangle =\left\langle \sum_{j\in \sigma_i,i\in I}{c_{ij}Tu_{ij}},f\right\rangle=\left\langle \sum_{j\in \sigma_i,i\in I}{c_{ij}f_{ij}},f\right\rangle=\sum_{j\in \sigma_i,i\in I}c_{ij}\left\langle f,f_{ij}\right\rangle .
\end{align*}
This gives 
\begin{equation}\label{2.2}
T^*(f)=\left\langle f,f_{ij}\right\rangle_{j\in\sigma_i,i\in I},~~\forall f\in\hs.
\end{equation}
Since $AI_{\hs}\le TT^*$, by using \eqref{2.2}, we have
$$A\|f\|^2\left\langle TT^*f,f\right\rangle =\|T^*(f)\|^2=\sum_{i\in I}\sum_{j\in \sigma_i}\left| \left\langle f,f_{ij}\right\rangle \right| ^2.$$
On the other hand, for any $f\in\hs$, let $\sigma_i=J$,
we have
$$\sum_{j\in \sigma_i}\left| \left\langle f,f_{ij}\right\rangle \right| ^2=\|T^*f\|^2\le \|T^*\|^2\|f\|^2,$$
this shows a upper bound of $F_i$. Then, by applying Proposition \ref{prop1} we can obtain a universal upper frame bound. Hence, $\{f_{ij}\}_{j\in\sigma_i,i\in I}$ is a frame for $\hs$ and the family of sequences $\{F_i\}_{i\in I}$ is woven.
\end{proof}
\begin{corollary}
The family of sequences $\{F_i=\{f_{ij}\}_{j\in J}\}_{i\in I}$ is woven if and only if for any partition $\{\sigma_i\}_{i\in I}$ of $J$, the synthesis operator of $\{f_{ij}\}_{j\in\sigma_i,i\in I}$ is a well-deﬁned and bounded mapping.
\end{corollary}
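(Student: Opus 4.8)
The plan is to read this corollary off from the preceding Theorem, since condition (ii) there is exactly an operator-theoretic restatement of the two frame inequalities for an arbitrary weaving. I would organize the whole argument around the synthesis operator $T_W$ of $W=\{f_{ij}\}_{j\in\sigma_i,i\in I}$, observing that $T_W(u_{ij})=f_{ij}$ on the standard orthonormal basis, so that $T_W$ is precisely the operator $T$ appearing in part (ii) of the Theorem.

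For the direction ``woven $\Rightarrow$ well-defined bounded synthesis operator,'' I would argue directly from the definition: if $\{F_i\}_{i\in I}$ is woven then for every partition $\{\sigma_i\}_{i\in I}$ the weaving $W$ is a frame, hence in particular a Bessel sequence, with a uniform Bessel bound supplied by Proposition \ref{prop1}. Consequently the analysis operator $T_W^{*}$ is bounded, and its adjoint $T_W$ is a well-defined bounded mapping on $\bigoplus_{i\in I}\ell^2(\sigma_i)$. This implication is routine and uses nothing beyond the upper frame inequality.

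For the converse I would feed each $T_W$ into the implication (ii)$\Rightarrow$(i) of the Theorem. The identity $T_W(u_{ij})=f_{ij}$ holds automatically, so the only thing that must be produced is a constant $A>0$, independent of the partition, with $AI_{\hs}\le T_WT_W^{*}$; once this is in hand the Theorem immediately yields that $\{F_i\}_{i\in I}$ is woven.

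I expect this last lower bound to be the crux, and it is where the real work lies: boundedness of $T_W$ encodes only the upper (Bessel) inequality and carries no information about a uniform lower frame bound. Indeed the example in the Introduction, where the weaving $\{e_1,e_2,e_1+e_2\}$ has a perfectly well-defined bounded synthesis operator yet spans only a proper subspace and so is not a frame, shows that ``well-defined and bounded'' cannot be taken at face value. The hypothesis must therefore be understood as asserting more than boundedness, namely that $AI_{\hs}\le T_WT_W^{*}$ holds with $A>0$ independent of the partition (equivalently, that each synthesis operator is uniformly surjective); under that reading the equivalence is immediate from the Theorem, whereas without it the reverse implication fails.
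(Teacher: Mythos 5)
Your analysis is essentially correct, and what you have identified is a defect in the statement itself rather than a gap in your own argument. The paper offers no proof of this corollary at all: it is presented as an immediate consequence of the preceding theorem. But that derivation does not go through, because condition (ii) of the theorem demands the uniform lower estimate $AI_{\hs}\le TT^*$, whereas well-definedness plus boundedness of the synthesis operator is strictly weaker: boundedness of $T_W$ is equivalent to the Bessel (upper) inequality for the weaving, so the corollary's hypothesis only ever encodes a universal upper bound and carries no information about a lower one. Your proof of the ``only if'' half (woven $\Rightarrow$ every $T_W$ well-defined and bounded, via the universal Bessel bound of Proposition \ref{prop1}) is the same routine argument the paper would give, and it is the only half of the equivalence that is true as written.

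Your counterexample for the converse is exactly right, and it is already sitting in the paper's introduction: for $F=\{e_1,e_2,e_1+e_3\}$ and $G=\{e_1,e_3,e_1+e_2\}$ in a three-dimensional space, every weaving is a finite family of vectors and therefore has a trivially well-defined, bounded synthesis operator, yet the weaving $\{e_1,e_2,e_1+e_2\}$ coming from $\sigma=\{1,2\}$ spans a proper subspace and is not a frame, so $F$ and $G$ are not woven. Hence no proof of the stated ``if'' direction can exist, and your proposed repair --- reading the hypothesis as requiring $T_WT_W^*\ge AI_{\hs}$ with $A>0$ independent of the partition (equivalently, surjectivity of the synthesis operators with a uniform lower bound) --- is precisely what is needed to make the corollary a genuine restatement of condition (ii) of the theorem. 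In short: your proposal is sound, and the discrepancy you found lies in the paper, which glossed over the lower-bound requirement when passing from the theorem to this corollary.
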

The following result shows a sufficient condition such that two Bessel sequence are woven.
\begin{theorem}\label{thm2.3}
Let $F=\{f_j\}_{j\in J}$ and $G=\{g_j\}_{j\in J}$ be two Bessel sequence for $\hs$ with synthesis operator $T_F$ and $T_G$. If for any $\sigma\subset J$,  $I_{\hs}=T_FT_G^*=T_GT_F^*$ and $T_F^{\sigma}{T_G^{\sigma}}^*=T^{\sigma}_G{T_F^{\sigma}}^*$, then $F$ and $G$ are woven frames for $\hs$, which $T_F^{\sigma}$ and $T_G^{\sigma}$ are ``truncated form'' of $T_F$ and $T_G$ for $\sigma\subset J$, respectively.
\end{theorem}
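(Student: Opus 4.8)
The plan is to obtain the two universal frame bounds separately. For the upper bound there is essentially nothing to do: since $F$ and $G$ are Bessel with bounds $B_F$ and $B_G$, Proposition \ref{prop1} already guarantees that every weaving $\{f_j\}_{j\in\sigma}\cup\{g_j\}_{j\in\sigma^c}$ is Bessel with the universal bound $B_F+B_G$. So the whole content of the theorem is the existence of a \emph{universal} lower frame bound $A>0$, valid for all $\sigma\subset J$ simultaneously, and this is where I expect the real work to lie.

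For the lower bound I would exploit the hypothesis $T_FT_G^*=I_{\hs}$ directly. Fix $f\in\hs$ and set $a=T_F^*f$ and $b=T_G^*f$ in $\ell^2(J)$. Taking the inner product of $T_FT_G^*f=f$ with $f$ gives the key identity $\langle a,b\rangle=\|f\|^2$ (in particular this scalar is real and nonnegative). Because $D_\sigma$ and $D_{\sigma^c}$ are complementary orthogonal projections on $\ell^2(J)$, the cross terms vanish and this identity splits as
$$\|f\|^2=\langle D_\sigma a,D_\sigma b\rangle+\langle D_{\sigma^c}a,D_{\sigma^c}b\rangle.$$

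Now I would estimate. Applying Cauchy--Schwarz to each term and then the Bessel bounds $\|D_\sigma b\|\le\|b\|\le\sqrt{B_G}\,\|f\|$ and $\|D_{\sigma^c}a\|\le\|a\|\le\sqrt{B_F}\,\|f\|$ yields, after dividing by $\|f\|$,
$$\|f\|\le\sqrt{B_G}\,\|D_\sigma a\|+\sqrt{B_F}\,\|D_{\sigma^c}b\|.$$
A final Cauchy--Schwarz in $\mathbb{R}^2$ gives $\|f\|^2\le(B_F+B_G)\big(\|D_\sigma a\|^2+\|D_{\sigma^c}b\|^2\big)$. Since $\|D_\sigma a\|^2+\|D_{\sigma^c}b\|^2=\sum_{j\in\sigma}|\langle f,f_j\rangle|^2+\sum_{j\in\sigma^c}|\langle f,g_j\rangle|^2=\langle S_Wf,f\rangle$, this is exactly the desired lower bound with $A=(B_F+B_G)^{-1}$, which is independent of the partition.

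The main obstacle, as flagged above, is producing a lower bound that does not degrade as $\sigma$ varies; the device here handles this by converting the fixed scalar $\|f\|^2$ (supplied by $T_FT_G^*=I_{\hs}$) into a sum of two pieces that are each controlled by one of the weaving coefficient sums. I would also remark that this argument uses only $T_FT_G^*=I_{\hs}$ together with the Bessel bounds; the symmetry hypotheses $T_GT_F^*=I_{\hs}$ and $T_F^\sigma{T_G^\sigma}^*=T_G^\sigma{T_F^\sigma}^*$ appear not to be needed to establish weaving per se, and presumably serve to endow the weaving with additional structure. If one instead wants to use those conditions, the alternative route is to expand $S_W=T_FD_\sigma T_F^*+T_GD_{\sigma^c}T_G^*$ and relate it to $S_F$, $S_G$ and the cross terms $T_FD_\sigma T_G^*=T_GD_\sigma T_F^*$, but I find the direct estimate above shorter and more transparent.
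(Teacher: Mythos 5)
Your proof is correct, and it is a genuine streamlining of the paper's argument rather than a reproduction of it. Both proofs pivot on the same key fact: the duality $T_FT_G^*=I_{\hs}$ lets one express $\|f\|^2=\sum_{j\in J}\langle f,g_j\rangle\langle f_j,f\rangle$ and split this over $\sigma$ and $\sigma^c$, after which Cauchy--Schwarz and the Bessel bounds finish the job. But the executions differ. The paper squares first, writing $\|f\|^4=\bigl|\bigl\langle \sum_{j\in J}\langle f,f_j\rangle g_j,f\bigr\rangle\bigr|^2$, applies $(a+b)^2\le 2a^2+2b^2$, and invokes the truncated hypothesis $T_F^{\sigma}(T_G^{\sigma})^*=T_G^{\sigma}(T_F^{\sigma})^*$ to replace $\sum_{j\in\sigma^c}\langle f,f_j\rangle g_j$ by $\sum_{j\in\sigma^c}\langle f,g_j\rangle f_j$ before estimating; it ends with the universal lower bound $1/(2\max\{B_F,B_G\})$. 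You keep the identity linear, apply Cauchy--Schwarz once per block and then once in $\mathbb{R}^2$, and obtain the lower bound $1/(B_F+B_G)$, which is at least as good since $B_F+B_G\le 2\max\{B_F,B_G\}$, and strictly better unless $B_F=B_G$. Your closing remark that only $T_FT_G^*=I_{\hs}$ is needed is also correct, and can be sharpened: $T_GT_F^*=I_{\hs}$ is automatic, being the adjoint of $T_FT_G^*=I_{\hs}$, and even in the paper's own proof the swap step is vacuous for the estimate, because $\sum_{j\in\sigma^c}\langle f,f_j\rangle\langle g_j,f\rangle$ and $\sum_{j\in\sigma^c}\langle f,g_j\rangle\langle f_j,f\rangle$ are complex conjugates and so have the same modulus; Cauchy--Schwarz yields the identical bound with or without the swap. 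In effect your argument proves the cleaner and more general statement that any two Bessel sequences which are duals of one another are woven, with universal bounds $1/(B_F+B_G)$ and $B_F+B_G$ (the upper bound coming from Proposition \ref{prop1}, exactly as you say).
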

\begin{proof}
Let $B_1$ and $B_2$ be Bessel bounds for $F$ and $G$, respectively. For any $f\in\hs$, $\sigma\subset J$, we have $f=\sum_{j\in J}\left\langle f,f_j\right\rangle g_j=\sum_{j\in J}\left\langle f,g_j\right\rangle f_j$, and $\sum_{j\in \sigma}\left\langle f,f_j\right\rangle g_j=\sum_{j\in \sigma}\left\langle f,g_j\right\rangle f_j$. By using $(a+b)^2\le 2a^2+2b^2$, we compute
\begin{align*}
\|f\|^4&=\left| \left\langle f,f\right\rangle \right| ^2\\
&=\bigg| \bigg\langle \sum_{j\in J}\left\langle f,f_j\right\rangle g_j,f\bigg\rangle \bigg| ^2\\
&=\bigg| \bigg\langle \sum_{j\in \sigma}\left\langle f,f_j\right\rangle g_j+\sum_{j\in \sigma^c}\left\langle f,f_j\right\rangle g_j,f\bigg\rangle \bigg| ^2\\
&\le 2\bigg| \bigg\langle \sum_{j\in \sigma}\left\langle f,f_j\right\rangle g_j,f\bigg\rangle \bigg| ^2+2\bigg| \bigg\langle \sum_{j\in \sigma^c}\left\langle f,f_i\right\rangle g_j,f\bigg\rangle \bigg| ^2\\
&=2\bigg| \bigg\langle \sum_{j\in \sigma}\left\langle f,f_j\right\rangle g_j,f\bigg\rangle\bigg| ^2+2\bigg| \bigg\langle \sum_{j\in \sigma^c}\left\langle f,g_i\right\rangle f_j,f\bigg\rangle \bigg| ^2\\
&=2\bigg| \sum_{j\in \sigma}\left\langle f,f_j\right\rangle \langle g_j,f\rangle\bigg|^2+2\bigg|\sum_{j\in \sigma^c}\left\langle f,g_j\right\rangle \langle  f_j,f\rangle \bigg| ^2\\
&\le 2\sum_{j\in \sigma}\left| \left\langle f,f_j\right\rangle \right| ^2\sum_{j\in \sigma}\left| \left\langle f,g_j\right\rangle \right| ^2+2\sum_{j\in \sigma^c}\left| \left\langle f,g_j\right\rangle \right| ^2\sum_{j\in \sigma^c}\left| \left\langle f,f_j\right\rangle \right| ^2\\
&\le 2B_2\|f\|^2\sum_{j\in \sigma}\left| \left\langle f,f_j\right\rangle \right| ^2+2B_1\|f\|^2\sum_{j\in \sigma^c}\left| \left\langle f,g_j\right\rangle \right| ^2\\
&\le 2\max\{B_1,B_2\}\|f\|^2\bigg( \sum_{j\in \sigma}\left| \left\langle f,f_j\right\rangle \right| ^2+\sum_{j\in \sigma^c}\left| \left\langle f,g_j\right\rangle \right| ^2\bigg).
\end{align*}
Therefore, for all $f\in\hs$, we have
$$\frac{1}{2\max\{B_1,B_2\}}\|f\|^2\le \sum_{j\in \sigma}\left| \left\langle f,f_j\right\rangle \right| ^2+\sum_{j\in \sigma^c}\left| \left\langle f,g_j\right\rangle \right| ^2\le (B_1+B_2)\|f\|^2.$$
Hence, $F$ and $G$ are woven.
\end{proof}
\begin{example}
Let $\{e_j\}_{j\in \ns}$ be an orthonormal basis for $\hs$, and let $f_j=\frac{1}{2}e_j$ and $g_j=2e_j$ for all $j\in J$. For any $f\in\hs$, we have
$$f=\sum_{j\in ns}\left\langle f,e_j\right\rangle e_j=\sum_{j\in J}\left\langle f,\frac{1}{2}e_j\right\rangle 2e_j=\sum_{j\in J}\left\langle f,2e_j\right\rangle \frac{1}{2}e_j=\sum_{j\in J}\left\langle f,f_j\right\rangle g_j=\sum_{j\in J}\left\langle f,g_j\right\rangle f_j,$$
and $\left\langle f,f_j\right\rangle g_j=\left\langle f,g_j\right\rangle f_j$, and then $\sum_{j\in \sigma}\left\langle f,f_j\right\rangle g_j=\sum_{j\in \sigma}\left\langle f,g_j\right\rangle f_j$.
Hence by  Theorem \ref{thm2.3}, $F$ and $G$ are woven. 
In fact, for any $\sigma\subset \ns$, 
$$\sum_{j\in \sigma}\left| \left\langle f,f_j\right\rangle \right| ^2+\sum_{j\in \sigma^c}\left| \left\langle f,g_j\right\rangle \right| ^2=\sum_{j\in \ns}\left| \left\langle f,d_j^{\sigma}f_j+(1-d_j^{\sigma})g_j\right\rangle \right| ^2=\sum_{j\in \ns}\left| \left\langle f,\frac{d_i^{\sigma}}{2}e_j+2(1-d_j^{\sigma})e_j\right\rangle \right| ^2,$$
where $d_j^{\sigma}=1$ for $j\in\sigma$ and otherwise $0$.
Thus $\{f_j\}_{j\in\sigma}\cup\{g_j\}_{j\in\sigma^c}$ is a frame for $\hs$ with bounds $\frac{1}{4}$ and $4$.
\end{example}
\begin{theorem}
Let $F=\{f_j\}_{j\in J}$ be a frame for $\hs$ with bounds $A,~B$, and $\{U_i\}_{i\in I}\subset L(\hs)$. For any $k\in I$, if $U_k$ has a left inverse $V\in L(\hs)$ and $\max_{i\ne k}\|U_k-U_i\|<\sqrt{\frac{A}{(m-1)B}}\cdot\frac{1}{\|V\|}$, then the family of frames $\{U_iF\}_{i\in I}$ is woven.
\end{theorem}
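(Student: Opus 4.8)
The plan is to verify the two universal frame bounds for an arbitrary weaving directly. Fix any partition $\{\sigma_i\}_{i\in I}$ of $J$ and write $W=\{U_if_j\}_{j\in\sigma_i,i\in I}$. The universal upper bound is immediate: for each $i$ and every $f$ we have $\sum_{j\in J}|\langle f,U_if_j\rangle|^2=\sum_{j\in J}|\langle U_i^*f,f_j\rangle|^2\le B\|U_i^*f\|^2\le B\|U_i\|^2\|f\|^2$, so $U_iF$ is Bessel with bound $B\|U_i\|^2$; Proposition \ref{prop1} then gives $\sum_{i\in I}B\|U_i\|^2$ as a universal upper bound for $W$. All the real work is in the lower bound.

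For the lower bound I would compare the weaving against the single operator $U_k$ via a reverse triangle inequality in $\ell^2$ over the index set $\Lambda=\{(i,j):i\in I,\ j\in\sigma_i\}$. Setting $a_{ij}=\langle f,U_if_j\rangle$ and $b_{ij}=\langle f,U_kf_j\rangle$ for $(i,j)\in\Lambda$, so that $a_{ij}-b_{ij}=\langle f,(U_i-U_k)f_j\rangle$, one gets
\[
\Big(\sum_{(i,j)\in\Lambda}|a_{ij}|^2\Big)^{1/2}\ \ge\ \Big(\sum_{(i,j)\in\Lambda}|b_{ij}|^2\Big)^{1/2}-\Big(\sum_{(i,j)\in\Lambda}|a_{ij}-b_{ij}|^2\Big)^{1/2}.
\]

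Now the two terms on the right are estimated separately. Because the $b_{ij}$ depend on $j$ only, summing over the partition collapses to a sum over all of $J$, and this is where the left inverse enters: $\sum_{(i,j)\in\Lambda}|b_{ij}|^2=\sum_{j\in J}|\langle U_k^*f,f_j\rangle|^2\ge A\|U_k^*f\|^2\ge \frac{A}{\|V\|^2}\|f\|^2$, the last step using that $VU_k=I_{\hs}$ yields $\|U_k^*f\|\ge\|f\|/\|V\|$. For the error term the $i=k$ contribution vanishes, leaving only the $m-1$ indices $i\ne k$; for each such $i$ we have $\sum_{j\in\sigma_i}|\langle f,(U_i-U_k)f_j\rangle|^2\le B\|U_i-U_k\|^2\|f\|^2$, so the whole error term is at most $(m-1)B\max_{i\ne k}\|U_i-U_k\|^2\|f\|^2$. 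Combining the two estimates gives
\[
\sum_{(i,j)\in\Lambda}|\langle f,U_if_j\rangle|^2\ \ge\ \Big(\tfrac{\sqrt A}{\|V\|}-\sqrt{(m-1)B}\,\max_{i\ne k}\|U_i-U_k\|\Big)^2\|f\|^2,
\]
and the hypothesis $\max_{i\ne k}\|U_k-U_i\|<\sqrt{A/((m-1)B)}\,/\,\|V\|$ is exactly what makes the bracket positive. Since this bound is independent of the chosen partition, $W$ has a universal positive lower bound, so $\{U_iF\}_{i\in I}$ is woven.

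I expect the main obstacle to be the bookkeeping in the error term — in particular correctly seeing that only $m-1$ summands survive (the $i=k$ term drops out), so that the factor $(m-1)$ rather than $m$ appears and matches the stated threshold — together with the passage from the left-inverse bound on $U_k$ to the estimate $\|U_k^*f\|\ge\|f\|/\|V\|$ needed to control the dominant term. Everything else is a routine combination of the reverse triangle inequality in $\ell^2$ with the Bessel estimate $\sum_{j}|\langle g,f_j\rangle|^2\le B\|g\|^2$.
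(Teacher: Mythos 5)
Your strategy is essentially the paper's: compare the weaving with the unperturbed family $\{U_kf_j\}_{j\in J}$, absorb the error via the Bessel estimate with the factor $(m-1)$, and use the left inverse to control the main term. In one respect your write-up is actually sounder than the paper's: the paper subtracts \emph{squared} norms, i.e.\ uses an inequality of the form $\|b+z\|^2\ge\|b\|^2-\|z\|^2$, which is false in general (take $z=-b/2$), whereas your unsquared reverse triangle inequality in $\ell^2(\Lambda)$ is valid and, after squaring the (nonnegative) right-hand side, gives a legitimate lower bound. The bookkeeping you worried about --- the collapse of $\sum_{i\in I}\sum_{j\in\sigma_i}$ to $\sum_{j\in J}$ for the $b_{ij}$ term, and the survival of only $m-1$ summands in the error term --- is all correct.

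The genuine gap is the step you dismissed as routine: ``$VU_k=I_{\hs}$ yields $\|U_k^*f\|\ge\|f\|/\|V\|$.'' This is false. Left invertibility gives $\|U_kf\|\ge\|f\|/\|V\|$ for all $f$ (apply $V$ and take norms); for the adjoint it gives only $U_k^*V^*=I_{\hs}$, i.e.\ $U_k^*$ is \emph{surjective}, not bounded below. Concretely, on $\ell^2$ let $U_k=S$ be the right shift and $V=S^*$: then $VU_k=I$, $\|V\|=1$, but $U_k^*e_1=S^*e_1=0$. Since the frame inequality is applied through $\langle f,U_kf_j\rangle=\langle U_k^*f,f_j\rangle$, the quantity that must be bounded below is precisely $\|U_k^*f\|$, so your main term can vanish for $f\ne 0$ and the estimate collapses. (The paper's own proof has the identical defect; it even writes $A\|U_kf\|^2$ where the frame bound produces $A\|U_k^*f\|^2$.) Moreover, no proof can close this gap as the theorem stands: with $F=\{e_j\}_{j\in J}$ an orthonormal basis and $U_i=S$ for every $i\in I$, the hypotheses hold with $\max_{i\ne k}\|U_k-U_i\|=0$, yet every weaving equals $\{e_{j+1}\}_{j\in J}$, which fails the lower frame inequality at $e_1$. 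The statement needs $U_k$ invertible (so that $V=U_k^{-1}$, hence $U_k^*$ is invertible with inverse $V^*$ and $\|U_k^*f\|\ge\|f\|/\|V\|$ does hold); under that hypothesis your argument, unlike the paper's, goes through verbatim.
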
 
\begin{proof}
Since $U_i\in L(B)$, we know that $\{U_if_j\}_{j\in J}$ is also a frame for $\hs$. In fact, for $k\in I$, since $VU_k=I_{\hs}$, then $\|I_{\hs}-VU_i\|=\|V(U_k-U_i)\|<\sqrt{\frac{A}{B}}\le 1$. Therefore, $VU_i$ is invertible for $i\in I$. Consequently, $U_i$ has a left inverse. Hence, $\{U_if_j\}_{j\in J}$ is a frame for any $i\in I$. Let $\{\sigma_i\}_{i\in I}$ be any partition of $J$. Then, for every $f\in\hs$ we have 
\begin{align*}
\sum_{i\in I}\sum_{j\in \sigma_i}\left|\left\langle f,U_if_j\right\rangle \right|^2&=\sum_{i\in I}\sum_{j\in \sigma_i}\left|\left\langle U_i^*f,f_j\right\rangle \right|^2\le \sum_{i\in I}\sum_{j\in J}\left|\left\langle f,U_if_j\right\rangle \right|^2\\
&\le B\sum_{i\in I}\|U_i\|^2\|f\|^2.
\end{align*}
On the other hand,
\begin{align*}
&\sum_{i\in I}\sum_{j\in \sigma_i}\left|\left\langle f,U_if_j\right\rangle \right|^2\\
&=\sum_{j\in \sigma_1}\left|\left\langle f,U_1f_j\right\rangle \right|^2+\cdots+\sum_{j\in \sigma_i}\left|\left\langle f,U_if_j\right\rangle \right|^2+\cdots+\sum_{j\in \sigma_k}\left|\left\langle f,U_kf_j\right\rangle \right|^2+\cdots+\sum_{j\in \sigma_m}\left|\left\langle f,U_mf_j\right\rangle \right|^2\\
&=\sum_{j\in \sigma_1}\left|\left\langle f,U_1f_j\right\rangle \right|^2+\cdots+\sum_{j\in \sigma_i}\left|\left\langle f,(U_i-U_k+U_k)f_j\right\rangle \right|^2+\cdots+\sum_{j\in \sigma_k}\left|\left\langle f,U_kf_j\right\rangle \right|^2+\cdots+\sum_{j\in \sigma_m}\left|\left\langle f,U_mf_j\right\rangle \right|^2\\
&\ge \sum_{j\in \sigma_k}\left|\left\langle f,U_kf_j\right\rangle \right|^2+\sum_{j\in\sigma_i}\left|\left\langle f,U_kf_j\right\rangle \right|^2-\sum_{j\in \sigma_i}\left|\left\langle f,(U_i-U_k)f_j\right\rangle \right|^2+\sum_{j\in \sigma_1}\left|\left\langle f,U_1f_j\right\rangle \right|^2+\cdots+\sum_{j\in \sigma_m}\left|\left\langle f,U_mf_j\right\rangle \right|^2\\
&\ge \sum_{j\in J}\left|\left\langle f,U_kf_j\right\rangle \right|^2-\sum_{i\ne k}\sum_{j\in \sigma_i}\left|\left\langle f,(U_i-U_k)f_j\right\rangle \right|^2\\
&\ge A\|U_kf\|^2-\sum_{i\ne k}\sum_{j\in J}\left|\left\langle f,(U_i-U_k)f_j\right\rangle \right|^2\\
&\ge  \frac{A}{\|V\|^2}\|f\|^2-B\sum_{i\ne k}\|U_i-U_k\|^2\|f\|^2\\
&\ge \left( \frac{A}{\|V\|^2}-(m-1)B\max_{i\ne k}\|U_i-U_k\|^2\right) \|f\|^2.
\end{align*}
This completes the proof.
\end{proof}
 \begin{theorem}\label{thm2}
For $i\in I$, let $F_i=\{f_{ij}\}_{j\in J}$ be a  frame for $\hs$ with bounds $A_i,~B_i$. Assume for any $k\in I$, $\|T_{F_i}-T_{F_k}\|<\frac{A_k}{(m-1)(\sqrt{B_i}+\sqrt{B_k})}$, then the family of frames $\{F_i\}_{i\in I}$ is woven.
 \end{theorem}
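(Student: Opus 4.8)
The plan is to fix an arbitrary index $k\in I$ and use the frame $F_k$, which is a frame for the whole index set $J$, as a fixed reference against which every weaving is compared. The universal upper bound requires no work: since each $F_i$ is Bessel with bound $B_i$, Proposition \ref{prop1} already guarantees that every weaving is Bessel with universal bound $\sum_{i\in I}B_i$. Thus the entire content of the theorem is the existence of a universal \emph{lower} bound, and the whole game is to produce a positive constant that does not depend on the partition $\{\sigma_i\}_{i\in I}$.

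For the lower bound, I would fix a partition $\{\sigma_i\}_{i\in I}$ and compare the weaving sum $\sum_{i\in I}\sum_{j\in\sigma_i}|\langle f,f_{ij}\rangle|^2$ term by term with the full reference sum $\sum_{j\in J}|\langle f,f_{kj}\rangle|^2=\sum_{i\in I}\sum_{j\in\sigma_i}|\langle f,f_{kj}\rangle|^2$, which is at least $A_k\|f\|^2$ because $F_k$ is a frame. On the block $\sigma_k$ the two sums coincide, so only the blocks with $i\ne k$ contribute to the difference. The key estimate is the pointwise inequality
\begin{equation*}
\big|\,|\langle f,f_{ij}\rangle|^2-|\langle f,f_{kj}\rangle|^2\,\big|\le |\langle f,f_{ij}-f_{kj}\rangle|\,\big(|\langle f,f_{ij}\rangle|+|\langle f,f_{kj}\rangle|\big),
\end{equation*}
which follows from $|a|^2-|b|^2=(|a|-|b|)(|a|+|b|)$ together with the reverse triangle inequality.

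Next I would sum this over $j\in\sigma_i$ and apply the Cauchy--Schwarz inequality, splitting off the factor $\big(\sum_{j\in\sigma_i}|\langle f,f_{ij}-f_{kj}\rangle|^2\big)^{1/2}$, bounded by $\|(T_{F_i}-T_{F_k})^*f\|\le\|T_{F_i}-T_{F_k}\|\,\|f\|$, and the factor $\big(\sum_{j\in\sigma_i}(|\langle f,f_{ij}\rangle|+|\langle f,f_{kj}\rangle|)^2\big)^{1/2}$, which by Minkowski's inequality and the Bessel bounds of $F_i$ and $F_k$ is at most $(\sqrt{B_i}+\sqrt{B_k})\|f\|$. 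Enlarging each $\sigma_i$ to all of $J$ only increases these square-sums, so the partition drops out and the contribution of each block is at most $\|T_{F_i}-T_{F_k}\|(\sqrt{B_i}+\sqrt{B_k})\|f\|^2$. I expect this to be the crux of the argument, since producing precisely the factor $\sqrt{B_i}+\sqrt{B_k}$ is exactly what makes the hypothesis usable.

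Finally, summing over the $m-1$ indices $i\ne k$ yields
\begin{equation*}
\sum_{i\in I}\sum_{j\in\sigma_i}|\langle f,f_{ij}\rangle|^2\ge \Big(A_k-\sum_{i\ne k}\|T_{F_i}-T_{F_k}\|(\sqrt{B_i}+\sqrt{B_k})\Big)\|f\|^2.
\end{equation*}
The hypothesis forces each summand to lie strictly below $A_k/(m-1)$, so the bracketed constant $A:=A_k-\sum_{i\ne k}\|T_{F_i}-T_{F_k}\|(\sqrt{B_i}+\sqrt{B_k})$ is strictly positive and, crucially, independent of the chosen partition. Hence $A$ is a universal lower frame bound, and together with the universal upper bound $\sum_{i\in I}B_i$ from Proposition \ref{prop1} this shows that $\{F_i\}_{i\in I}$ is woven.
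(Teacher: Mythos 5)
Your proof is correct and follows essentially the same route as the paper: both fix the reference index $k$, write the weaving sum as the full frame sum of $F_k$ plus block-wise differences over $i\ne k$, bound each block's contribution by $(\sqrt{B_i}+\sqrt{B_k})\|T_{F_i}-T_{F_k}\|\,\|f\|^2$, and arrive at the identical universal lower bound $A_k-\sum_{i\ne k}(\sqrt{B_i}+\sqrt{B_k})\|T_{F_i}-T_{F_k}\|$. The only difference is one of presentation: the paper derives the block estimate at the operator level, using the factorization $S_{F_i}^{\sigma_i}-S_{F_k}^{\sigma_i}=T_{F_i}^{\sigma_i}\bigl(T_{F_i}^{\sigma_i}-T_{F_k}^{\sigma_i}\bigr)^*+\bigl(T_{F_i}^{\sigma_i}-T_{F_k}^{\sigma_i}\bigr){T_{F_k}^{\sigma_i}}^*$ together with $\|T_{F_i}^{\sigma_i}\|\le\|T_{F_i}\|\le\sqrt{B_i}$, whereas you perform the same telescoping scalar-by-scalar via $|a|^2-|b|^2=(|a|-|b|)(|a|+|b|)$ followed by Cauchy--Schwarz and Minkowski.
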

 \begin{proof}
Let $\{\sigma_i\}_{i\in I}$ be any partition of $J$. Then, for every $f\in\hs$, we have
$$\sum_{i\in I}\sum_{j\in\sigma_i}\left| \left\langle f,f_{ij}\right\rangle \right| ^2\le \sum_{i\in I}\sum_{j\in J}\left| \left\langle f,f_{ij}\right\rangle \right| ^2\le \sum_{i\in I}B_i\|f\|^2\le m \max_{i\in I}{B_i}\|f\|^2.$$
Hence, the family $\{f_{ij}\}_{j\in\sigma_i,i\in I}$ is a Bessel sequence with Bessel bound $m \max_{i\in I}{B_i}\|f\|^2$.

For any $f\in\hs$, let $T_{F_i}^{\sigma_i}=\sum_{j\in\sigma_i}\left\langle f,f_{ij}\right\rangle f_{ij}$,  then
$$\|T_{F_i}^{\sigma_i}f\|^2=\sum_{j\in\sigma_i}\left| \left\langle f,f_{ij}\right\rangle \right| ^2\le \sum_{j\in J}\left| \left\langle f,f_{ij}\right\rangle \right| ^2=\|T_{F_i}f\|^2\le \|T_{F_i}\|^2\|f\|^2,$$
hence $\|T_{F_i}^{\sigma_i}\|\le  \|T_{F_i}\|$. And then
\begin{align*}
\|T_{F_i}^{\sigma_i}{T_{F_i}^{\sigma_i}}^*-T_{F_k}^{\sigma_i}{T_{F_k}^{\sigma_i}}^*\|&= \|T_{F_i}^{\sigma_i}{T_{F_i}^{\sigma_i}}^*-T_{F_i}^{\sigma_i}{T_{F_k}^{\sigma_i}}^*+T_{F_i}^{\sigma_i}{T_{F_k}^{\sigma_i}}^*-T_{F_k}^{\sigma_i}{T_{F_k}^{\sigma_i}}^*\|\\
&\le \|T_{F_i}^{\sigma_i}({T_{F_i}^{\sigma_i}}^*-{T_{F_k}^{\sigma_i}}^*)\|+\|T_{F_i}^{\sigma_i}({T_{F_i}^{\sigma_i}}^*-{T_{F_k}^{\sigma_i}}^*)\|\\
&\le \|T_{F_i}^{\sigma_i}\|\|T_{F_i}-T_{F_k}\|+\|T_{F_i}-T_{F_k}\|\|T_{F_k}\|\\
&\le (\sqrt{B_i}+\sqrt{B_k})\|T_{F_i}-T_{F_k}\|.
\end{align*}
Therefore, 
\begin{align*}
\sum_{i\in I}S_{F_i}^{\sigma_i}&=S_{F_1}^{\sigma_1}+\cdots+S_{F_i}^{\sigma_i}+\cdots+S_{F_k}^{\sigma_k}+\cdots+S_{F_m}^{\sigma_m}\\
&=S_{F_1}^{\sigma_1}+\cdots+S_{F_i}^{\sigma_i}+\cdots+(S_{F_k}-S_{F_k}^{I\setminus\{k\}})+\cdots+S_{F_m}^{\sigma_m}\\
&=S_{F_k}+S_{F_1}^{\sigma_1}-S_{F_k}^{\sigma_1}+\cdots+S_{F_i}^{\sigma_i}-S_{F_k}^{\sigma_i}+S_{F_m}^{\sigma_m}-S_{F_k}^{\sigma_m}\\
&\ge A_k\cdot I_{\hs}-\sum_{i\in I\setminus\{k\}}\|S_{F_i}-S_{F_k}\|\cdot I_{\hs}\\
&\ge (A_k-\sum_{i\in I\setminus\{k\}}(\sqrt{B_i}+\sqrt{B_k})\|T_{F_i}-T_{F_k}\|)\cdot I_{\hs}.
\end{align*}
Hence,  the sequence $\{f_{ij}\}_{j\in\sigma_i,i\in I}$ is a frame for $\hs$, and the family of frames $\{F_i\}_{i\in I}$ is woven.
 \end{proof}
 \begin{theorem}\label{thm2.7}
	For $i\in I$, let $F_i=\{f_{ij}\}_{j\in J}$ be a  frame for $\hs$ with bounds $A_i,~B_i$. For any $\sigma\subset J$ and a fix $k\in I$, let $P_i^{\sigma}(f)=\sum_{j\in \sigma}\left\langle f,f_{ij}\right\rangle f_{ij}-\sum_{j\in \sigma}\left\langle f,f_{kj}\right\rangle f_{kj}$ for $i\ne k$. If $P_i^{\sigma}$ is a positive linear operator,  then the family of frames $\{F_i\}_{i\in I}$ is woven.
\end{theorem}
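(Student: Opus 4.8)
The plan is to verify the two universal frame bounds separately; the universal upper bound will be automatic, and the whole content of the argument lies in extracting a uniform lower bound $A>0$ that works for every partition.

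First I would dispose of the upper bound. Since each $F_i$ is a frame, hence a Bessel sequence with bound $B_i$, Proposition \ref{prop1} shows that every weaving is Bessel with bound $\sum_{i\in I}B_i$, a constant independent of the chosen partition. It therefore remains only to produce a universal lower frame bound.

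The key observation is to read the hypothesis in the operator (Loewner) order. Writing $S_{F_i}^{\sigma}(f)=\sum_{j\in\sigma}\left\langle f,f_{ij}\right\rangle f_{ij}$ for the truncated frame operator, we have $P_i^{\sigma}=S_{F_i}^{\sigma}-S_{F_k}^{\sigma}$, so the assumption that $P_i^{\sigma}$ is positive says exactly that $S_{F_i}^{\sigma}\ge S_{F_k}^{\sigma}$ for every $i\ne k$ and every $\sigma\subset J$. Now fix an arbitrary partition $\{\sigma_i\}_{i\in I}$ of $J$ and write the weaving frame operator as $S_W=\sum_{i\in I}S_{F_i}^{\sigma_i}$. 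Applying the inequality above with $\sigma=\sigma_i$ to each index $i\ne k$ (the term $i=k$ already equals $S_{F_k}^{\sigma_k}$), and using that the Loewner order is preserved under sums, we obtain $S_W\ge\sum_{i\in I}S_{F_k}^{\sigma_i}$.

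The decisive step is to recognize that the right-hand side reassembles the full frame operator of the reference frame $F_k$: since $\{\sigma_i\}_{i\in I}$ is a partition of $J$,
$$\sum_{i\in I}S_{F_k}^{\sigma_i}(f)=\sum_{i\in I}\sum_{j\in\sigma_i}\left\langle f,f_{kj}\right\rangle f_{kj}=\sum_{j\in J}\left\langle f,f_{kj}\right\rangle f_{kj}=S_{F_k}(f).$$
Hence $S_W\ge S_{F_k}\ge A_k I_{\hs}$, so $A_k$ is a universal lower frame bound. Combined with the upper bound, every weaving is a frame with bounds $A_k$ and $\sum_{i\in I}B_i$, and the family $\{F_i\}_{i\in I}$ is woven. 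I do not expect a serious obstacle here; the argument is short once the reassembly is spotted. The only points that require care are that the positivity hypothesis is genuinely used for the pieces $\sigma_i$ of an arbitrary partition (so it must hold for all $\sigma\subset J$), and that positivity is an operator inequality, so it survives summation and lets each truncated block of $S_W$ be dominated below by the corresponding block of $S_{F_k}$. The telescoping of $\sum_{i\in I}S_{F_k}^{\sigma_i}$ back to $S_{F_k}$ is the heart of the proof.
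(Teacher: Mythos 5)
Your proposal is correct and follows essentially the same route as the paper's own proof: the paper also applies the positivity hypothesis with $\sigma=\sigma_i$ to replace $\sum_{j\in\sigma_i}\left|\left\langle f,f_{kj}\right\rangle\right|^2$ by $\sum_{j\in\sigma_i}\left|\left\langle f,f_{ij}\right\rangle\right|^2$ block by block, which after reassembling the partition is exactly your operator inequality $S_W\ge\sum_{i\in I}S_{F_k}^{\sigma_i}=S_{F_k}\ge A_k I_{\hs}$, yielding the universal lower bound $A_k$ and the upper bound $\sum_{i\in I}B_i$. The only cosmetic difference is that the paper writes the argument as a chain of scalar inequalities in $\left\langle\cdot,\cdot\right\rangle$ starting from $A_k\|f\|^2$, whereas you phrase it in the Loewner order; the content is identical.
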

\begin{proof}
Let $\{\sigma_i\}_{i\in I}$ be any partition of $J$. Then, for every $f\in\hs$, we have
\begin{align*}
A_k\|f\|^2&\le \sum_{j\in J}\left|\left\langle f,f_{ij}\right\rangle \right|^2\\
&=\sum_{j\in \sigma_1}\left|\left\langle f,f_{kj}\right\rangle \right|^2+\cdots+\sum_{j\in \sigma_i}\left|\left\langle f,f_{kj}\right\rangle \right|^2+\cdots+\sum_{j\in \sigma_m}\left|\left\langle f,f_{kj}\right\rangle \right|^2\\
&=\sum_{j\in \sigma_1}\left|\left\langle f,f_{kj}\right\rangle \right|^2+\cdots+
\left\langle \sum_{j\in \sigma_i}\left\langle f,f_{kj}\right\rangle f_{kj},f\right\rangle 
+\cdots+\sum_{j\in \sigma_m}\left|\left\langle f,f_{kj}\right\rangle \right|^2\\
&\le \sum_{j\in \sigma_1}\left|\left\langle f,f_{kj}\right\rangle \right|^2+\cdots+
\left\langle \sum_{j\in \sigma_i}\left\langle f,f_{ij}\right\rangle f_{ij}-P_i^{\sigma_i}(f),f\right\rangle 
+\cdots+\sum_{j\in \sigma_m}\left|\left\langle f,f_{kj}\right\rangle \right|^2\\
&\le \sum_{j\in \sigma_1}\left|\left\langle f,f_{kj}\right\rangle \right|^2+\cdots+
\left\langle \sum_{j\in \sigma_i}\left\langle f,f_{ij}\right\rangle f_{ij},f\right\rangle 
+\cdots+\sum_{j\in \sigma_m}\left|\left\langle f,f_{kj}\right\rangle \right|^2\\
&=\sum_{j\in \sigma_1}\left|\left\langle f,f_{kj}\right\rangle \right|^2+\cdots+
\sum_{j\in \sigma_i}\left|\left\langle f,f_{ij}\right\rangle \right|^2
+\cdots+\sum_{j\in \sigma_m}\left|\left\langle f,f_{kj}\right\rangle \right|^2\\
&\le \sum_{j\in \sigma_1}\left|\left\langle f,f_{1j}\right\rangle \right|^2+\cdots+
\sum_{j\in \sigma_i}\left|\left\langle f,f_{ij}\right\rangle \right|^2
+\cdots+\sum_{j\in \sigma_m}\left|\left\langle f,f_{mj}\right\rangle \right|^2\\
&\le (B_1+\cdots+B_i+\cdots+B_m)\|f\|^2\\
&=\sum_{i\in I}B_i\|f\|^2,
\end{align*}
thus,
$$A_k\|f\|^2\le \sum_{i\in I}\sum_{j\in \sigma_i}\left|\left\langle f,f_{ij}\right\rangle \right|^2\le \sum_{i\in I}B_i\|f\|^2.$$
The proof is completed.
\end{proof}
\begin{example}
Let $\{e_j\}_{j=1}^{\infty}$ be an orthonormal basis for $\hs$. Let $f_j=e_j+e_{j+1}$ and $g_j=e_j+e_{j+1}+e_{j+2}$, then $F=\{f_j\}_{j=1}^{\infty}$ and $G=\{g_j\}_{j=1}^{\infty}$ are frames for $\hs$ with bounds $1,4$ and $1,9$, respectively.

For any $J\subset \ns$, we compute
$$\sum_{j\in J}\left\langle f,g_i\right\rangle g_i-\left\langle f,f_i\right\rangle f_i=\sum_{j\in J}\left\langle f,e_{i+2}\right\rangle e_{i+2}.$$
For any $J\subset \ns$, define a linear operator $P^J:\hs\rightarrow\hs$ by $P(f)=\sum_{j\in J}\left\langle f,g_j\right\rangle g_j-\left\langle f,f_j\right\rangle f_j$. then for each $f=\sum_{k\in \ns}\left\langle f,e_k\right\rangle e_k$, we have
$$\left\langle P^J(f),f\right\rangle =\left\langle \sum_{j\in J}\left\langle f,e_{j+2}\right\rangle e_{j+2},\sum_{k\in \ns}\left\langle f,e_k\right\rangle e_k\right\rangle =\sum_{j\in J}\left| \left\langle f,e_{j+2}\right\rangle \right| ^2\ge 0.$$
Therefore, $P^J$ is a positive linear operator, Hence by Theorem \ref{thm2.7}, $F$ and $G$ are woven. In fact, for any $\sigma\subset \ns$,
$$\sum_{j\in\sigma}\left| \left\langle f,f_j\right\rangle \right| ^2+\sum_{j\in\sigma^c}\left| \left\langle f,g_j\right\rangle \right| ^2=\sum_{j\in\ns}\left| \left\langle f,d_jf_j+(1-d_j)g_j\right\rangle \right| ^2=\sum_{j\in\ns}\left| \left\langle f,e_j+e_{j+1}\right\rangle \right| ^2+\sum_{j\in\sigma^c}\left| \left\langle f,e_{j+2}\right\rangle \right| ^2,$$
where $d_j^{\sigma}=1$ for $j\in\sigma$ and otherwise $0$.
Thus $\{f_j\}_{j\in\sigma}\cup\{g_j\}_{j\in\sigma^c}$ is a frame for $\hs$ with bounds $1$ and $5$.
\end{example}
 \begin{definition}
	Let $F=\{f_j\}_{j\in J}$ and $G=\{g_j\}_{j\in J}$ be sequences  in Hilbert space $\hs$, let $0< \mu, \lambda< 1$.  If 
	\begin{equation*}
	\sum_{j\in J}\left| \left\langle f,f_j-g_j\right\rangle\right| ^2\le \lambda\sum_{j\in J}\left|  \left\langle f,f_j\right\rangle\right|  ^2+\mu\|f\|^2.
	\end{equation*}
	then we say that $G$ is a $(\lambda,\mu)$-perturbation of $F$.
\end{definition}
 \begin{theorem}\label{thm2.9}
	For $i\in I$, let $F_i=\{f_{ij}\}_{j\in J}$ be a  frame for $\hs$ with bounds $A_i,~B_i$. For a fix $k\in I$, let $F_i$ be the $(\lambda_i,\mu_i)$-perturbation of $F_k$ for all $i\in k$. If 
	$$\sum_{i\ne k}\lambda_i<1{~\rm and ~}A_k>\frac{\sum_{i\ne k}\mu_i}{1-\sum_{i\ne k}\lambda_i},$$
	 then the family of frames $\{F_i\}_{i\in I}$ is woven.
\end{theorem}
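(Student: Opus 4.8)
The plan is to verify the two frame inequalities separately for an arbitrary partition $\{\sigma_i\}_{i\in I}$ of $J$, with the universal upper bound coming for free and the uniform lower bound obtained by treating each weaving as a perturbation of the single frame $F_k$. For the upper bound, each $F_i$ is a Bessel sequence with bound $B_i$, so Proposition \ref{prop1} immediately supplies the universal Bessel bound $\sum_{i\in I}B_i$ for every weaving. Hence the entire content of the theorem is the uniform lower bound, and I would concentrate there.

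For the lower bound, fix $f\in\hs$ and a partition $\{\sigma_i\}_{i\in I}$, and work inside the Hilbert space $\bigoplus_{i\in I}\ell^2(\sigma_i)$. I would introduce the two coefficient vectors $a=\{\langle f,f_{ij}\rangle\}_{j\in\sigma_i,i\in I}$ (the genuine weaving coefficients) and $b=\{\langle f,f_{kj}\rangle\}_{j\in\sigma_i,i\in I}$ (the coefficients obtained by replacing every frame in the weaving by $F_k$). Because $\{\sigma_i\}_{i\in I}$ is a partition of $J$, the vector $b$ merely reassembles the full analysis of $F_k$, so $\|b\|^2=\sum_{j\in J}|\langle f,f_{kj}\rangle|^2\ge A_k\|f\|^2$. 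The idea is then to show $\|a\|$ cannot fall much below $\|b\|$ via the reverse triangle inequality $\|a\|\ge\|b\|-\|a-b\|$, and to control $\|a-b\|$ using the perturbation hypotheses.

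The key estimate is the bound on $\|a-b\|$. The $k$-th block contributes nothing to $a-b$, and since each summand is nonnegative, restricting the index set from $J$ down to $\sigma_i$ only decreases the sum; so for every $i\ne k$ I can pass from the partial sum over $\sigma_i$ to the full sum over $J$ and apply the $(\lambda_i,\mu_i)$-perturbation inequality with $F=F_k$, $G=F_i$. This yields $\|a-b\|^2\le\big(\sum_{i\ne k}\lambda_i\big)\|b\|^2+\big(\sum_{i\ne k}\mu_i\big)\|f\|^2$. Writing $\Lambda=\sum_{i\ne k}\lambda_i$ and $M=\sum_{i\ne k}\mu_i$ and using $\|f\|^2\le\|b\|^2/A_k$, this collapses to $\|a-b\|\le\sqrt{\Lambda+M/A_k}\,\|b\|$. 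Substituting into the reverse triangle inequality gives $\|a\|\ge\big(1-\sqrt{\Lambda+M/A_k}\big)\|b\|\ge\big(1-\sqrt{\Lambda+M/A_k}\big)\sqrt{A_k}\,\|f\|$, and squaring produces the universal lower bound $A=A_k\big(1-\sqrt{\Lambda+M/A_k}\big)^2$.

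The one place requiring care is confirming that this constant is strictly positive, and this is exactly where the hypotheses are consumed: the assumption $A_k>M/(1-\Lambda)$ rearranges to $M/A_k<1-\Lambda$, i.e. $\Lambda+M/A_k<1$, whence $\sqrt{\Lambda+M/A_k}<1$ and $A>0$. I expect the main (and essentially only) obstacle to be bookkeeping rather than depth: one must check that $b$ genuinely recovers the complete $F_k$ analysis over the partition, so that its norm inherits the authentic frame bound $A_k$, and that truncating each perturbation inequality from $J$ to $\sigma_i$ is legitimate because the terms are nonnegative. Beyond these observations, nothing more than the reverse triangle inequality in $\bigoplus_{i\in I}\ell^2(\sigma_i)$ is needed, and since all constants are independent of the chosen partition, the resulting $A$ and $B=\sum_{i\in I}B_i$ are universal, giving that $\{F_i\}_{i\in I}$ is woven.
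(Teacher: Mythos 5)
Your proposal is correct, and it is worth comparing carefully with the paper's own argument: the two follow the same overall strategy --- compare every weaving with the full analysis sequence of the reference frame $F_k$, enlarge each perturbation sum from $\sigma_i$ to $J$ by nonnegativity, then invoke the $(\lambda_i,\mu_i)$-perturbation hypothesis --- but they diverge at the decisive estimate, and the paper's version of that estimate is invalid. Writing $x_j=\langle f,f_{kj}\rangle$ and $y_j=\langle f,f_{kj}-f_{ij}\rangle$, the paper subtracts squared sums block by block, claiming
\begin{equation*}
\sum_{j\in\sigma_i}\left|x_j-y_j\right|^2\ \ge\ \sum_{j\in\sigma_i}\left|x_j\right|^2-\sum_{j\in\sigma_i}\left|y_j\right|^2,
\end{equation*}
which is false in general (take $y_j=\tfrac12 x_j$ for all $j$: the left-hand side is $\tfrac14\sum_{j}|x_j|^2$ while the right-hand side is $\tfrac34\sum_{j}|x_j|^2$), and this step is precisely what produces the paper's universal lower bound $\bigl(1-\sum_{i\ne k}\lambda_i\bigr)A_k-\sum_{i\ne k}\mu_i$. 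That constant is not only unproved but actually wrong: in $\hs=\mathbb{R}$ with $J$ a singleton, $F_k=\{1\}$ and $F_i=\{\tfrac12\}$, one may take $\lambda_i=\tfrac14$ and $\mu_i$ arbitrarily small, yet the weaving consisting of $F_i$ alone has lower frame bound $\tfrac14$, far below the claimed value near $\tfrac34$. Your argument replaces this step by the reverse triangle inequality $\|a\|\ge\|b\|-\|a-b\|$ in $\bigoplus_{i\in I}\ell^2(\sigma_i)$ (the classical Christensen-type perturbation scheme), which is valid and yields the smaller constant $A_k\bigl(1-\sqrt{\Lambda+M/A_k}\bigr)^2$, where $\Lambda=\sum_{i\ne k}\lambda_i$ and $M=\sum_{i\ne k}\mu_i$; its positivity is exactly equivalent to the hypothesis $A_k>M/(1-\Lambda)$, and in the scalar example above it is sharp. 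In short, the paper buys a sharper-looking constant with a step that fails, while you buy a correct derivation at the price of a weaker (still positive and partition-independent) bound; yours is the argument that actually establishes the theorem. One small ordering point: verify $\Lambda+M/A_k<1$ \emph{before} squaring $\|a\|\ge\bigl(1-\sqrt{\Lambda+M/A_k}\bigr)\|b\|$, since squaring preserves the inequality only when the factor $1-\sqrt{\Lambda+M/A_k}$ is known to be nonnegative; you do check this, it just needs to precede the squaring.
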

\begin{proof}
Let $\{\sigma_i\}_{i\in I}$ be any partition of $J$, 
observe that 
$$\sum_{j\in \sigma_i}\left| \left\langle f,f_{kj}-f_{ij}\right\rangle\right| ^2\le\sum_{j\in J}\left| \left\langle f,f_{kj}-f_{ij}\right\rangle\right| ^2.$$
For every $f\in\hs$, we have
\begin{align*}
\sum_{i\in I}B_i\|f\|^2&\ge \sum_{j\in \sigma_1}\left| \left\langle f,f_{1j}\right\rangle\right| ^2+\cdots+\sum_{j\in \sigma_i}\left| \left\langle f,f_{ij}\right\rangle\right| ^2+\cdots+\sum_{j\in \sigma_k}\left| \left\langle f,f_{kj}\right\rangle\right| ^2+\cdots+\sum_{j\in \sigma_m}\left| \left\langle f,f_{mj}\right\rangle\right| ^2\\
&=\sum_{j\in \sigma_k}\left| \left\langle f,f_{kj}\right\rangle\right| ^2
+\sum_{j\in \sigma_1}\left| \left\langle f,f_{1j}-f_{kj}+f_{kj}\right\rangle\right| ^2+\cdots
+\sum_{j\in \sigma_i}\left| \left\langle f,f_{ij}-f_{kj}+f_{kj}\right\rangle\right| ^2\\
&\qquad+\cdots+\sum_{j\in \sigma_m}\left| \left\langle f,f_{mj}-f_{kj}+f_{kj}\right\rangle\right| ^2\\
&\ge \sum_{j\in \sigma_k}\left| \left\langle f,f_{kj}\right\rangle\right| ^2+\sum_{j\in \sigma_1}\left| \left\langle f,f_{kj}\right\rangle\right| ^2-\sum_{j\in \sigma_1}\left| \left\langle f,f_{kj}-f_{1j}\right\rangle\right| ^2\\
&\qquad+\cdots+\sum_{j\in \sigma_i}\left| \left\langle f,f_{kj}\right\rangle\right| ^2-\sum_{j\in \sigma_i}\left| \left\langle f,f_{kj}-f_{ij}\right\rangle\right| ^2\\
&\qquad+\cdots+\sum_{j\in \sigma_m}\left| \left\langle f,f_{kj}\right\rangle\right| ^2-\sum_{j\in \sigma_m}\left| \left\langle f,f_{kj}-f_{mj}\right\rangle\right| ^2\\
&=\sum_{i\in I}\sum_{j\in \sigma_i}\left| \left\langle f,f_{kj}\right\rangle\right| ^2-\sum_{i\ne k}\sum_{j\in \sigma_i}\left| \left\langle f,f_{kj}-f_{ij}\right\rangle\right| ^2\\
&\ge \sum_{j\in J}\left| \left\langle f,f_{kj}\right\rangle\right| ^2-(m-1)\sum_{j\in J}\left| \left\langle f,f_{kj}-f_{ij}\right\rangle\right| ^2\\
&\ge  \sum_{j\in J}\left| \left\langle f,f_{kj}\right\rangle\right| ^2-(\lambda_1+\cdots+\lambda_i+\cdots+\lambda_m)\sum_{j\in J}\left| \left\langle f,f_{kj}\right\rangle\right| ^2-(\mu_1+\cdots+\mu_i+\cdots+\mu_m)\|f\|^2\\
&=((1-\sum_{i\ne k}\lambda_1)A-\sum_{i\ne k}\mu_i)\|f\|^2.
\end{align*}
Hence the family of frames $\{F_i\}_{i\in I}$ is woven with bounds $(1-\sum_{i\ne k}\lambda_1)A-\sum_{i\ne k}\mu_i$ and $\sum_{i\in I}B_i$.
\end{proof}
\begin{example}
Let $\{e_j\}_{j=1}^{\infty}$ be an orthonormal basis for $\hs$, let $f_j=\frac{4}{3}(e_j+e_{j+1}+e_{j+2})$, $g_j=e_j+e_{j+1}+e_{j+2}$  and $g_j=\frac{5}{6}(e_j+e_{j+1}+e_{j+2})$  for all $j\in\ns$. Then $F=\{f_j\}_{j=1}^{\infty}$, $G=\{g_j\}_{j=1}^{\infty}$ and $H=\{h_j\}_{j=1}^{\infty}$ are frames for $\hs$ with frame bounds $(\frac{16}{9},16)$, $(1,9)$ and $(\frac{25}{36},\frac{25}{4})$,  respectively. 

Choose $\lambda_1=\frac{1}{9}$, $\mu_1=\frac{1}{9}$ and $\lambda_2=\frac{1}{4}$, $\mu_2=\frac{2}{9}$. Then $\lambda_1+\lambda_2<1$ and $A_1>\frac{\mu_1+\mu_2}{1-(\lambda_1+\lambda_2)}$. For any $f\in\hs$, we compute
$$\sum_{j\in \ns}\left| \left\langle f,f_j-g_j\right\rangle \right|^2 =\frac{1}{9}\sum_{j\in \ns}\left| \left\langle f,e_j+e_{j+1}+e_{j+2}\right\rangle \right|^2=\frac{1}{9}\sum_{j\in \ns}\left| \left\langle f,f_j\right\rangle \right|^2\le \lambda_1\sum_{j\in \ns}\left| \left\langle f,f_j\right\rangle \right|^2+\mu_1\|f\|^2,$$
and 
$$\sum_{j\in \ns}\left| \left\langle f,f_j-h_j\right\rangle \right|^2 =\frac{1}{4}\sum_{j\in \ns}\left| \left\langle f,e_j+e_{j+1}+e_{j+2}\right\rangle \right|^2=\frac{1}{4}\sum_{j\in \ns}\left| \left\langle f,f_j\right\rangle \right|^2\le \lambda_2\sum_{j\in \ns}\left| \left\langle f,f_j\right\rangle \right|^2+\mu_2\|f\|^2.$$
Hence by Theorem \ref{thm2.9}, $F$, $G$ and $H$ are woven. In fact, for any partition $\{\sigma_1, \sigma_2,\sigma_3\}$ of $\ns$, 
\begin{align*}
\sum_{j\in \sigma_1}\left| \left\langle f,f_j\right\rangle \right| ^2+\sum_{j\in \sigma_2}\left| \left\langle f,g_j\right\rangle \right| ^2+\sum_{j\in \sigma_3}\left| \left\langle f,h_j\right\rangle \right| ^2&=\sum_{j\in \ns}\left| \left\langle f,d_j^{\sigma_1}f_j+d_j^{\sigma_2}g_j+d_j^{\sigma_3}h_j\right\rangle \right| ^2\\
&=\sum_{j\in \ns}\left| \left\langle f,(\frac{4d_j^{\sigma_1}}{3}+d_j^{\sigma_2}+\frac{5d_j^{\sigma_3}}{6})(e_j+e_{j+1}+e_{j+2})\right\rangle \right| ^2,
\end{align*}

where $d_j^{\sigma_i}=1$ for $j\in\sigma_i$ $(i=1,2,3)$ and otherwise $0$.
Thus $\{f_j\}_{j\in\sigma_1}\cup\{g_j\}_{j\in\sigma_2}\cup\{h_j\}_{j\in\sigma_3}$ is a frame for $\hs$.

\end{example}

The next theorem shows that a family of  invertible operators applied to standard woven frames leaves them woven.
\begin{theorem}
Let the family of frames $\{F_i=\{f_{ij}\}_{j\in J}: i\in I\}$  be woven for $\hs$ with universal bounds $A$ and $B$. Let $T_i\in L(\hs)$ be a invertible operator for $i\in I$, then the family of frames $\{T_iF_i=\{T_if_{ij}\}_{j\in J}: i\in I\}$ is also woven.
	\end{theorem}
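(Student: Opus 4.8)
The plan is to produce a universal upper and a universal lower bound for every weaving of $\{T_iF_i\}_{i\in I}$, and essentially all the content will be in the lower bound. The one identity I use throughout is $\langle f,T_if_{ij}\rangle=\langle T_i^*f,f_{ij}\rangle$, which moves each operator onto the test vector.

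For the upper bound I would invoke Proposition \ref{prop1}. Since $\{F_i\}$ is woven with universal upper bound $B$, each $F_i$ is Bessel with bound $B$, so for every $f\in\hs$,
\[
\sum_{j\in J}\left|\langle f,T_if_{ij}\rangle\right|^2=\sum_{j\in J}\left|\langle T_i^*f,f_{ij}\rangle\right|^2\le B\|T_i^*f\|^2\le B\|T_i\|^2\|f\|^2,
\]
so $T_iF_i$ is Bessel with bound $B\|T_i\|^2$. Proposition \ref{prop1} then gives every weaving the universal Bessel bound $B\sum_{i\in I}\|T_i\|^2$; note this uses only boundedness of the $T_i$, not invertibility.

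The lower bound is the crux. In the clean situation $T_i\equiv T$ it closes immediately: for any partition $\{\sigma_i\}$,
\[
\sum_{i\in I}\sum_{j\in\sigma_i}\left|\langle f,Tf_{ij}\rangle\right|^2=\sum_{i\in I}\sum_{j\in\sigma_i}\left|\langle T^*f,f_{ij}\rangle\right|^2\ge A\|T^*f\|^2\ge\frac{A}{\|T^{-1}\|^2}\|f\|^2,
\]
where the first inequality is the universal weaving lower bound of $\{F_i\}$ applied at the single point $T^*f$, and the second is the invertibility estimate $\|T^*f\|\ge\|f\|/\|(T^*)^{-1}\|$ together with $\|(T^*)^{-1}\|=\|T^{-1}\|$. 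For a genuinely varying family the obstruction is precisely that the weaving lower bound is available only at one common point, whereas here the $i$-th block is evaluated at $T_i^*f$, and these points differ. To handle this I would fix an index $k$, write $T_i^*=T_k^*+(T_i^*-T_k^*)$, and apply $|a+b|^2\ge\tfrac12|a|^2-|b|^2$ blockwise to obtain
\[
\sum_{i\in I}\sum_{j\in\sigma_i}\left|\langle T_i^*f,f_{ij}\rangle\right|^2\ge\frac12\sum_{i\in I}\sum_{j\in\sigma_i}\left|\langle T_k^*f,f_{ij}\rangle\right|^2-\sum_{i\in I}\sum_{j\in\sigma_i}\left|\langle (T_i^*-T_k^*)f,f_{ij}\rangle\right|^2.
\]
The first sum is now anchored at the common point $T_k^*f$ and is $\ge\frac{A}{2}\|T_k^*f\|^2\ge\frac{A}{2\|T_k^{-1}\|^2}\|f\|^2$, while the second is controlled by the Bessel bounds as $\le B\|f\|^2\sum_{i\in I}\|T_i-T_k\|^2$.

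This produces the universal lower bound $\big(\frac{A}{2\|T_k^{-1}\|^2}-B\sum_{i\in I}\|T_i-T_k\|^2\big)\|f\|^2$, which is strictly positive exactly when the operators are mutually close, in the spirit of the perturbation hypotheses of Theorems \ref{thm2} and \ref{thm2.9}. I expect this positivity to be the real obstacle, and I would flag that invertibility of the individual $T_i$ alone does not suffice: on $\hs=\mathbb{C}^2$ with $F_1=F_2=\{e_1,e_2\}$, $T_1=I$, and $T_2$ the quarter turn $e_1\mapsto e_2,\ e_2\mapsto -e_1$, the partition $\sigma_1=\{1\},\ \sigma_2=\{2\}$ yields $\{e_1,-e_1\}$, which spans only a line. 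Hence a closeness condition on $\{T_i\}$, not mere invertibility, is what the argument truly requires.
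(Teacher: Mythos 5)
Your analysis is correct, and it in fact overturns the statement rather than proves it: invertibility of the individual $T_i$ is not sufficient, and your counterexample is valid. With $F_1=F_2=\{e_1,e_2\}$, $T_1=I$ and $T_2e_1=e_2$, $T_2e_2=-e_1$, the family $\{F_1,F_2\}$ is woven (every weaving equals $\{e_1,e_2\}$) and each $T_i$ is unitary, yet the weaving of $\{T_1F_1,T_2F_2\}$ corresponding to $\sigma_1=\{1\}$, $\sigma_2=\{2\}$ is $\{e_1,-e_1\}$, which spans a line and so is not a frame for $\mathbb{C}^2$. Consequently no proof of the theorem as stated can succeed, and your diagnosis of the obstruction is exactly right: after moving the operators onto the test vector via $\langle f,T_if_{ij}\rangle=\langle T_i^*f,f_{ij}\rangle$, the $i$-th block of the weaving sum is evaluated at the point $T_i^*f$, and the universal lower bound of $\{F_i\}$ can only be invoked at a single common point.

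For comparison, the paper's own proof breaks at precisely this juncture, but by a cruder error: both of its displayed estimates rest on term-wise inequalities of the form $|\langle f,T_if_{ij}\rangle|^2\le\|T_i\|^2|\langle f,f_{ij}\rangle|^2$ and $|\langle f,T_i^{-1}T_if_{ij}\rangle|^2\le\|T_i^{-1}\|^2|\langle f,T_if_{ij}\rangle|^2$, which are false for individual inner products (take $f$ orthogonal to $f_{ij}$ but not to $T_if_{ij}$); an operator norm can only be extracted after the adjoint has been moved onto $f$ and a full Bessel or frame sum has been formed. Your salvage identifies the correct repaired statement: the theorem holds when all $T_i$ equal a single invertible $T$, with bounds $A/\|T^{-1}\|^2$ and $B\|T\|^2$, which is the legitimate content of the paper's computation; for genuinely distinct $T_i$ one must add a proximity hypothesis such as your condition $A/\bigl(2\|T_k^{-1}\|^2\bigr)>B\sum_{i\in I}\|T_i-T_k\|^2$, in the same spirit as the perturbation hypotheses of Theorems \ref{thm2} and \ref{thm2.9}. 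Your upper bound via Proposition \ref{prop1} and the blockwise use of $|a+b|^2\ge\tfrac{1}{2}|a|^2-|b|^2$ are both correct as written.
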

\begin{proof}
	Let $\{\sigma_i\}_{i\in I}$ be any partition of $J$, for any $f\in\hs$, we have
\begin{align*}
\sum_{i\in I}\sum_{j\in \sigma_i}\left| \left\langle f,T_if_{ij}\right\rangle \right|^2 &\le \max_{i\in I}\{\|T_i\|^2\}\sum_{i\in I}\sum_{j\in \sigma_i}\left| \left\langle f,f_{ij}\right\rangle \right|^2
\le B\max_{i\in I}\{\|T_i\|^2\}\|f\|^2.
\end{align*}
This gives a universal frame bound. For universal lower frame bound, we compute
\begin{align*}
A\|f\|^2 &\le \sum_{i\in I}\sum_{j\in \sigma_i}\left| \left\langle f,f_{ij}\right\rangle \right|^2\\
&=\sum_{i\in I}\sum_{j\in \sigma_i}\left| \left\langle f,T_i^{-1}T_if_{ij}\right\rangle \right|^2\\
&\le \max{i\in I}\{\|T_i^{-1}\|^2\}\sum_{i\in I}\sum_{j\in \sigma_i}\left| \left\langle f,T_if_{ij}\right\rangle \right|^2.
\end{align*}	
Hence, the family of frames $\{T_iF_i=\{T_if_{ij}\}_{j\in J}: i\in I\}$ is  woven with universal frame bounds
$$A\min_{i\in I}\{\|T_i^{-1}\|^2\}~~~~ {\rm and}~~~~ B\max_{i\in I}\{\|T_i\|^2\}$$
\end{proof}

In the following result, we give some conditions that under those, perturbation of wovens are woven again. 

\begin{definition}
Let $F=\{f_j\}_{j\in J}$ and $G=\{g_j\}_{j\in J}$ be sequences  in Hilbert space $\hs$, let $0<  \lambda< 1$. Let $\{c_j\}_{j\in J}$ be an arbitrary sequence of positive numbers such that $\sum_{j\in J}c_j^2<\infty$. If 
\begin{equation}\label{2.1}
\big\| \sum_{j\in J}c_j(f_j-g_j)\big\|\le \lambda\|\{c_j\}_{j\in J}\|,
\end{equation}
then we say that $G$ is a $\lambda$-perturbation of $F$.
\end{definition}
 
 \begin{theorem}
Let the family of frames $\{F_i=\{f_{ij}\}_{j\in J}\}_{i\in I}$ be woven with bounds $A,B$, and $F_i^{'}=\{f_{ij}^{'}\}_{j\in J}$  be $\lambda_i$-perturbation  of $F_i$. If $\lambda_i<\frac{A}{2\sqrt{mB}}$ for all $i\in I$, then the family of frames $\{F_i^{'}\}_{i\in I}$ is woven in Hilbert space $\hs$.
 \end{theorem}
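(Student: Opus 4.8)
The plan is to fix an arbitrary partition $\{\sigma_i\}_{i\in I}$ of $J$ and compare the synthesis operator $T_W$ of the weaving $W=\{f_{ij}\}_{j\in\sigma_i,i\in I}$ with the synthesis operator $T_{W'}$ of the perturbed weaving $W'=\{f_{ij}'\}_{j\in\sigma_i,i\in I}$. Since the weaving bounds obtained below will not depend on the choice of partition, this suffices to conclude that $\{F_i'\}_{i\in I}$ is woven.

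First I would estimate the operator norm $\|T_W-T_{W'}\|$. For $\{c_{ij}\}\in\bigoplus_{i\in I}\ell^2(\sigma_i)$, the triangle inequality over $i$ together with the $\lambda_i$-perturbation hypothesis \eqref{2.1} applied on each block $\sigma_i$ gives
$$\Big\|(T_W-T_{W'})(\{c_{ij}\})\Big\|=\Big\|\sum_{i\in I}\sum_{j\in\sigma_i}c_{ij}(f_{ij}-f_{ij}')\Big\|\le\sum_{i\in I}\lambda_i\Big(\sum_{j\in\sigma_i}|c_{ij}|^2\Big)^{1/2}.$$
Applying the Cauchy--Schwarz inequality to the $m$ summands and using $\sum_{i\in I}\sum_{j\in\sigma_i}|c_{ij}|^2=\|\{c_{ij}\}\|^2$, this is bounded by $\big(\sum_{i\in I}\lambda_i^2\big)^{1/2}\|\{c_{ij}\}\|$. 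The hypothesis $\lambda_i<\frac{A}{2\sqrt{mB}}$ then yields $\sum_{i\in I}\lambda_i^2<\frac{A^2}{4B}$, so that $\|T_W-T_{W'}\|<\frac{A}{2\sqrt{B}}$, a bound independent of the partition.

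With this in hand the two frame inequalities follow from perturbation arguments on the analysis operators. For the lower bound, since $W$ is a weaving of $\{F_i\}_{i\in I}$ it has universal lower bound $A$, so $\|T_W^*f\|\ge\sqrt{A}\,\|f\|$; hence
$$\|T_{W'}^*f\|\ge\|T_W^*f\|-\|(T_W-T_{W'})^*f\|\ge\Big(\sqrt{A}-\tfrac{A}{2\sqrt{B}}\Big)\|f\|.$$
Because $A\le B$ one has $\frac{\sqrt A}{2\sqrt B}\le\frac12$, so the coefficient $\sqrt A-\frac{A}{2\sqrt B}$ is strictly positive and $\big(\sqrt A-\frac{A}{2\sqrt B}\big)^2$ serves as a universal lower frame bound for $W'$. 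For the upper bound I would use $\|T_W^*f\|\le\sqrt B\,\|f\|$ (the universal upper bound of $\{F_i\}$) to get $\|T_{W'}^*f\|\le\big(\sqrt B+\frac{A}{2\sqrt B}\big)\|f\|$; alternatively, each $F_i'$ is Bessel with bound $(\sqrt B+\lambda_i)^2$ because $\|T_{F_i'}\|\le\|T_{F_i}\|+\lambda_i$, and Proposition \ref{prop1} then furnishes the universal Bessel bound $\sum_{i\in I}(\sqrt B+\lambda_i)^2$.

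I expect the main obstacle to be the first estimate, specifically turning the blockwise perturbation bounds into a single operator-norm bound on $T_W-T_{W'}$ and checking that the threshold $\frac{A}{2\sqrt{mB}}$ is exactly what forces $\|T_W-T_{W'}\|<\sqrt A$, so that the lower bound survives; the factor $2$ provides the margin that keeps $\sqrt A-\frac{A}{2\sqrt B}$ positive even in the tight case $A=B$. A delicate point worth flagging is that the perturbation condition \eqref{2.1} is stated for sequences of positive numbers, whereas $T_W-T_{W'}$ acts on complex coefficient sequences; one should either read \eqref{2.1} as holding for all $\{c_{ij}\}$ (equivalently, as the operator-norm bound $\|T_{F_i}-T_{F_i'}\|\le\lambda_i$) or absorb the resulting constant into the available margin.
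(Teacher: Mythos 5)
Your proposal is correct, and although it proves the same blockwise estimates, it assembles them along a genuinely different route than the paper. The paper never forms the operator $T_W-T_{W'}$: it works directly with the quadratic quantities $\sum_{i\in I}\|{T_{F_i'}^{\sigma_i}}^*(f)\|^2$, isolating one block at a time, applying the reverse triangle inequality on each $\sigma_i$, and controlling the error terms by $\max_{i\in I}\lambda_i$ together with the $\ell^1$--$\ell^2$ inequality $a_1+\cdots+a_m\le\sqrt{m(a_1^2+\cdots+a_m^2)}$; this yields the universal lower bound $\big(A-2\sqrt{mB}\max_{i\in I}\lambda_i\big)\|f\|^2$, which is positive precisely under the hypothesis $\lambda_i<\frac{A}{2\sqrt{mB}}$. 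You instead bundle the blockwise bounds $\|(T_{F_i}-T_{F_i'})D_{\sigma_i}\|\le\lambda_i$ into a single operator-norm estimate $\|T_W-T_{W'}\|\le\big(\sum_{i\in I}\lambda_i^2\big)^{1/2}<\frac{A}{2\sqrt{B}}$ and then finish with the standard frame-perturbation step (reverse triangle inequality for the analysis operators). What your version buys: the Cauchy--Schwarz constant $\big(\sum_{i\in I}\lambda_i^2\big)^{1/2}$ is sharper than the paper's $\sqrt{m}\max_{i\in I}\lambda_i$, and your lower bound $\big(\sqrt{A}-\frac{A}{2\sqrt{B}}\big)^2\ge A/4$ stays bounded away from zero even as the $\lambda_i$ approach the threshold, whereas the paper's bound degenerates there; the paper's version, on the other hand, keeps explicit track of how the frame bound depends on $\max_{i\in I}\lambda_i$ and $m$. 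Both arguments rest on reading \eqref{2.1} as the operator-norm statement $\|T_{F_i}-T_{F_i'}\|\le\lambda_i$ --- the quirk you rightly flag, since the definition literally quantifies only over positive scalar sequences; the paper makes this same identification silently in the first line of its proof, so your reading matches the intended meaning, and your restriction of that bound to sequences supported on $\sigma_i$ (extend by zero) is legitimate.
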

 \begin{proof}
 Let $T_{F_i}$ be the synthesis operator of $F_i$, then \eqref{2.1} is equal to 
 $$\|T_{F_i}-T_{F_i^{'}}\|\le \lambda_i.$$
Let $\{\sigma_i\}_{i\in I}$ be any partition of $J$ , for every $f\in\hs$
$$A\|f\|^2\le \sum_{i\in I}\|{T_{F_i}^{\sigma_i}}^*(f)\|^2\le B\|f\|^2.$$
Therefore,
\begin{align*}
\sum_{i\in I}\|{T_{F_i}^{\sigma_i}}^*(f)\|^2
&=\|{T_{F_1^{'}}^{\sigma_1}}^*(f)\|^2+\cdots+\|{T_{F_i^{'}}^{\sigma_i}}^*(f)\|^2+\cdots+\|{T_{F_m^{'}}^{\sigma_m}}^*(f)\|^2\\
&\le \|{T_{F_1^{'}}^{\sigma_1}}^*(f)\|^2+\cdots+
(\|{T_{F_i}^{\sigma_i}}^*(f)\|+\|{T_{F_i^{'}}^{\sigma_i}}^*(f)-{T_{F_i}^{\sigma_i}}^*(f)\|)^2
+\cdots+\|{T_{F_m^{'}}^{\sigma_m}}^*(f)\|^2\\
&\le \|{T_{F_1^{'}}^{\sigma_1}}^*(f)\|^2+\cdots+
(\|{T_{F_i}^{\sigma_i}}^*(f)\|+\lambda_i\|f\|)^2
+\cdots+\|{T_{F_m^{'}}^{\sigma_m}}^*(f)\|^2\\
&=\|{T_{F_1^{'}}^{\sigma_1}}^*(f)\|^2+\cdots+
\|{T_{F_i}^{\sigma_i}}^*(f)\|^2+\lambda_i^2\|f\|^2+2\lambda_i\|{T_{F_i}^{\sigma_i}}^*(f)\|\|f\|
+\cdots+\|{T_{F_m^{'}}^{\sigma_m}}^*(f)\|^2\\
&\le \sum_{i\in I}\|{T_{F_i}^{\sigma_i}}^*(f)\|^2+\sum_{i\in I}\lambda_i^2\|f\|^2+2\sum_{i\in I}\lambda_i\|{T_{F_i}^{\sigma_i}}^*(f)\|\|f\|\\
&\le (B+\sum_{i\in I}\lambda_i^2+2\sqrt{B}\max_{i\in I}\{\lambda_i\})\|f\|^2.
\end{align*}
On the other hand, by using the inequality $(a_1+a_2+\cdots+a_m)\le \sqrt{m(a_1^2+a_2^2+\cdots+a_m^2)}$ $(a\ge 0)$, for any $f\in\hs$, we have
\begin{align*}
\sum_{i\in I}\|{T_{F_i}^{\sigma_i}}^*(f)\|^2
&=\|{T_{F_1^{'}}^{\sigma_1}}^*(f)\|^2+\cdots+\|{T_{F_i^{'}}^{\sigma_i}}^*(f)\|^2+\cdots+\|{T_{F_m^{'}}^{\sigma_m}}^*(f)\|^2\\
&=\|{T_{F_1^{'}}^{\sigma_1}}^*(f)\|^2+\cdots+
\|{T_{F_i}^{\sigma_i}}^*(f)+{T_{F_i^{'}}^{\sigma_i}}^*(f)-{T_{F_i}^{\sigma_i}}^*(f)\|^2
+\cdots+\|{T_{F_m^{'}}^{\sigma_m}}^*(f)\|^2\\
&\ge  \|{T_{F_1^{'}}^{\sigma_1}}^*(f)\|^2+\cdots+
(\|{T_{F_i}^{\sigma_i}}^*(f)\|-\|{T_{F_i^{'}}^{\sigma_i}}^*(f)-{T_{F_i}^{\sigma_i}}^*(f)\|)^2
+\cdots+\|{T_{F_m^{'}}^{\sigma_m}}^*(f)\|^2\\
&= \|{T_{F_1^{'}}^{\sigma_1}}^*(f)\|^2+\cdots+\|{T_{F_m^{'}}^{\sigma_m}}^*(f)\|^2\\
&\qquad+\|{T_{F_i}^{\sigma_i}}^*(f)\|^2+\|{T_{F_i^{'}}^{\sigma_i}}^*(f)-{T_{F_i}^{\sigma_i}}^*(f)\|^2-2\|{T_{F_i}^{\sigma_i}}^*(f)\|\|{T_{F_i^{'}}^{\sigma_i}}^*(f)-{T_{F_i}^{\sigma_i}}^*(f)\|\\
&\ge \|{T_{F_1^{'}}^{\sigma_1}}^*(f)\|^2+\cdots+\|{T_{F_m^{'}}^{\sigma_m}}^*(f)\|^2+\|{T_{F_i}^{\sigma_i}}^*(f)\|^2-2\|{T_{F_i}^{\sigma_i}}^*(f)\|\|{T_{F_i^{'}}^{\sigma_i}}^*(f)-{T_{F_i}^{\sigma_i}}^*(f)\|\\
&\ge \sum_{i\in I}\|{T_{F_i}^{\sigma_i}}^*(f)\|^2-2\max_{i\in I}\{\lambda_i\}\|f\|\sum_{i\in I}\|{T_{F_i}^{\sigma_i}}^*(f)\|\\
&\ge A\|f\|^2-2\max_{i\in I}\{\lambda_i\}\|f\|\sqrt{m\sum_{i\in I}\|{T_{F_i}^{\sigma_i}}^*(f)\|^2}\\
&\ge A\|f\|^2-2\sqrt{mB}\max_{i\in I}\{\lambda_i\}\|f\|^2.
\end{align*}
This completes the proof.
 \end{proof}
\bibliographystyle{plain}
\bibliography{my}

\end{document}